\numberwithin{equation}{section}
\numberwithin{table}{section}
\numberwithin{figure}{section}
\def\cf{cf.~}
\newcommand{\hsp}[1]{{\hbox{\hspace{#1}}}}
\newcounter{letcnt} 
\def\a{\alpha}  
\def\d{\delta}
\def\z{\zeta}
\def\m{\mu}
\def\n{\nu}
\def\s{\sigma}
\def\w{\omega}
\def\tAd{\mathrm{Ad}} \def\tad{\mathrm{ad}}
\def\tAut{\mathrm{Aut}}
\def\bC{\mathbb C}
\def\td{\mathrm{d}}
 \def\tdim{\mathrm{dim}}
\def\cE{\mathcal E}
\def\tEnd{\mathrm{End}}
\def\cF{\mathcal F}
\def\cG{\mathcal G} 
\def\tGr{\mathrm{Gr}}
\def\fg{{\mathfrak{g}}}
 \def\sH{\mathscr{H}}
\def\tHom{\mathrm{Hom}}
\def\bh{\mathbf{h}}
 \def\tim{\mathrm{im}}
\def\fk{\mathfrak{k}}
 \def\tker{\mathrm{ker}}
\def\cM{\mathcal M}
 \def\cO{\mathcal O}
\def\tO{\mathrm{O}}
\def\bP{\mathbb P}
\def\fp{\mathfrak{p}}
 \def\cQ{\mathcal Q}
\def\bR{\mathbb R}
\def\trank{\mathrm{rank}}
 \def\tSO{\mathrm{SO}}
\def\tSp{\mathrm{Sp}}
 \def\tSym{\mathrm{Sym}}
\def\tSU{\mathrm{SU}}
 \def\tspan{\mathrm{span}}
\def\cT{\mathcal T}
 \def\cU{\mathcal U} \def\tU{\mathrm{U}}
 \def\cV{\mathcal V}
\def\sX{\mathscr{X}}
\def\half{\tfrac{1}{2}}
\def\tand{\quad\hbox{and}\quad}
\def\sb{{\hbox{\tiny{$\bullet$}}}}
\def\inj{\hookrightarrow}
\def\op{\oplus}
\def\ot{\otimes}
\def\tw{\hbox{\small $\bigwedge$}}
\newenvironment{a_list}
  {\begin{enumerate}[label=(\alph*),itemsep=3pt,leftmargin=25pt,listparindent=20pt]}
  {\end{enumerate}}
\newenvironment{i_list}
  {\begin{enumerate}[label=(\roman*),itemsep=3pt,leftmargin=25pt,listparindent=20pt]}
  {\end{enumerate}}
\newtheorem{mainthm}{Main Theorem}
\newtheorem{corollary}[equation]{Corollary}
\newtheorem{lemma}[equation]{Lemma}
\newtheorem{proposition}[equation]{Proposition}
\newtheorem{theorem}[equation]{Theorem}
\theoremstyle{definition}
\newtheorem*{boldQ*}{Question}
\newtheorem*{boldP*}{Problem}
\theoremstyle{definition}
\theoremstyle{remark}
\newtheorem*{assume*}{Assume}
\newtheorem*{answer*}{Answer}
\newtheorem{claim}[equation]{Claim}
\newtheorem*{claim*}{Claim}
\newtheorem*{definition*}{Definition}
\newtheorem{example}[equation]{Example}
\newtheorem*{example*}{Example}
\newtheorem*{hint*}{Hint}
\newtheorem*{notation*}{Notation}
\newtheorem{remark}[equation]{Remark}
\newtheorem*{remark*}{Remark}
\newtheorem*{remarks*}{Remarks}
\newtheorem*{fact*}{Fact}
\newtheorem*{emphL*}{Lemma}
\newtheorem*{emphQ*}{Question}
\newtheorem*{emphA*}{Answer}
\begin{document}
\title[Characterization of CY VHS]{Characterization of Calabi--Yau variations of Hodge structure over tube domains by characteristic forms}
\author[Robles]{Colleen Robles}
\email{robles@math.duke.edu}
\address{Mathematics Department, Duke University, Box 90320, Durham, NC  27708-0320} 
\thanks{Robles is partially supported by NSF grants DMS 1361120 and 1611939.}
\date{\today}
\begin{abstract}
Sheng and Zuo's characteristic forms are invariants of a variation of Hodge structure.  We show that they characterize Gross's canonical variations of Hodge structure of Calabi--Yau type over (Hermitian symmetric) tube domains.
\end{abstract}
{
}
\maketitle

\setcounter{tocdepth}{2}
\let\oldtocsection=\tocsection
\let\oldtocsubsection=\tocsubsection
\let\oldtocsubsubsection=\tocsubsubsection
\renewcommand{\tocsection}[2]{\hspace{0em}\oldtocsection{#1}{#2}}
\renewcommand{\tocsubsection}[2]{\hspace{3em}\oldtocsubsection{#1}{#2}}

\section{Introduction}

\subsection{The problem}

To every tube domain $\Omega = G/K$ Gross \cite{MR1258484} has associated a canonical (real) variation of Hodge structure (VHS)
\begin{equation}\label{E:canV}
  \begin{tikzcd}  \cV_\Omega \arrow[d] \\ \Omega \end{tikzcd}
\end{equation}
of Calabi--Yau (CY) type.  The construction of \eqref{E:canV} is representation theoretic, not geometric, in nature; in particular, the variation is \emph{not}, a priori, induced by a family 
\begin{equation}\label{E:X-S}
  \begin{tikzcd}  \sX \arrow[d,"\rho"] \\ S \end{tikzcd}
\end{equation}
of polarized, algebraic Calabi--Yau manifolds.  So an interesting problem is to construct such a family realizing \eqref{E:canV}.  By ``realize" we mean the following: let 
\begin{equation}\label{E:itau}
  \tau : \Omega \ \to \ D_\Omega
\end{equation}
be the period map associated with \eqref{E:canV}, and $\tilde\Phi_\rho : \tilde S \to D$ be the (lifted) period map associated with \eqref{E:X-S}; then we are asking for an identification $D \simeq D_\Omega$ with respect to which $\tilde\Phi_\rho(\tilde S)$ is an open subset of $\tau(\Omega)$.  

\begin{example} \label{eg:CY}
One may obtain a family of  $n$-folds by resolution of double covers of $\bP^n$ branched over $2n+2$ hyperplanes in general position.  When $n=1,2$, the associated VHS is a geometric realization of Gross's type $A$ canonical VHS over $\Omega = \tSU(n,n)/\mathrm{S}(\tU(n) \times \tU(n))$.  For $n=1$ this is the classical case of elliptic curves branched over fours points in $\bP^1$.  In the case $n=2$ this was proved by Matsumoto, Sasaki and Yoshida \cite{MR1136204}.  However, for $n\ge3$, the family does not realize Gross's type $A$ canonical VHS \cite{MR3004580, MR3303246}, \cf~Example \ref{eg:SZ}.
\end{example}  

A necessary condition for \eqref{E:X-S} to realize \eqref{E:canV} is that invariants associated to \eqref{E:canV} and \eqref{E:X-S} agree.  For example, $\tdim\,S = \tdim\,\Omega$, and the Hodge numbers $\bh_\rho$ and $\bh_\Omega$ must agree.  (Of course, the latter implies that we may identify $D$ with $D_\Omega$.)  These are discrete invariants.  Sheng and Zuo's characteristic forms \cite[\S3]{MR2657440} are infinitesimal, differential--geometric invariants associated with holomorphic, horizontal maps (such as $\tau$ and $\tilde\Phi_\rho$).  In particular, the characteristic forms will necessarily agree when \eqref{E:X-S} realizes \eqref{E:canV}.  

\begin{example} \label{eg:SZ}
When $n\ge 3$ the family of Calabi--Yau's in Example \ref{eg:CY} does \emph{not} realize Gross's type $A$ canonical VHS over $\Omega = \tSU(n,n)/S(\tU(n) \times \tU(n))$.  (However, the two discrete invariants above \emph{do} agree.)  This was proved by Gerkmann, Sheng, van Straten and Zuo \cite{MR3004580} in the $n=3$ case, and their argument was extended to $n\ge 3$ by Sheng, Xu and Zuo \cite{MR3303246}.  \emph{The crux of the argument is to show that the second characteristic forms do not agree.}  (In fact, their zero loci are not of the same dimension if $n\ge3$.)\footnote{A similar argument was used by Sasaki, Yamaguchi and Yoshida \cite{MR1476250} to disprove a related conjecture on the projective solution of the system of hypergeometric equations associated with the hyperplane configurations.}
\end{example}

The purpose of this paper is to show that agreement of the characteristic forms is both necessary and \emph{sufficient} for \eqref{E:X-S} to realize \eqref{E:canV}.  We will consider a more general situation, replacing the period map $\tilde\Phi_\rho : \tilde S \to D\simeq D_\Omega$ with an arbitrary horizontal, holomorphic map $f : M \to \check D_\Omega$ into the compact dual, and asking when $f$ realizes \eqref{E:canV}.  The first main result is stated precisely in Theorem \ref{T:main1}.  To state the informal version, we first recall that Gross's canonical VHS is given by a real representation 
\begin{equation} \label{E:iG}
  G \ \to \ \tAut(U,Q) \ 
  := \ \{ g \in \tAut(U) \ | \ Q(gu,gv) = Q(u,v) \,,\ \forall \ u,v \in U \}\,;
\end{equation}
the period domain $D_\Omega$ parameterizes (real) $Q$--polarized Hodge structures on $U$ of Calabi--Yau type; and the period map \eqref{E:itau} extends to a $G_\bC$--equivariant map $\tau : \check \Omega \to \check D_\Omega$ between the compact duals.

\begin{mainthm}[Informal statement of Theorem \ref{T:main1}] \label{T:mt1}
If the characteristic forms of $f$ and $\tau$ are isomorphic, then there exists $g \in \tAut(U_\bC)$ so that $g\circ f(M)$ is an open subset of $\tau(\check\Omega)$.
\end{mainthm}

\noindent Characteristic forms are defined in \S\ref{S:C}. The statement of Theorem \ref{T:main1} is a bit stronger than the above: in fact, it suffices to check that the characteristic forms of $f$ are isomorphic to those of $\tau$ at a single point $x \in M$, so long as the integer-valued differential invariants (\S\ref{S:Cisom}) associated with $f$ are constant in a neighborhood of $x$.  Theorem \ref{T:main1} is a consequence of: (i) an identification of the characteristic forms of Gross's \eqref{E:canV} with the fundamental forms of the minimal homogeneous embedding $\s: \check\Omega \inj \bP U_\bC$ (Proposition \ref{P:C=F}), and (ii) Hwang and Yamaguchi's characterization \cite{MR2030098} of compact Hermitian symmetric spaces by their fundamental forms.

Main Theorem \ref{T:mt1} characterizes horizontal maps realizing Gross's canonical VHS modulo the full linear automorphism group $\tAut(U_\bC)$.  It is natural to ask if we can characterize the horizontal maps realizing Gross's VHS up to the (smaller)  group $\tAut(U_\bC,Q)$ preserving the polarization --- these groups are the natural symmetry groups of Hodge theory.  (Note that $\tAut(U_\bC,Q)$ is the automorphism group of $\check D_\Omega$, the full $\tAut(U_\bC)$ does not preserve the compact dual.)  The second main result does exactly this.  This congruence requires a more refined notion of agreement of the characteristic forms than the isomorphism of Main Theorem \ref{T:mt1}; the precise statement is given in Theorem \ref{T:main2}.  The refinement is encoded by the condition that a certain vector-valued differential form $\eta$ vanishes on a frame bundle $\cE_f \to M$ (\cf~Remark \ref{R:eta=0}(b)).  Informally, one begins with a frame bundle $\cE_Q \to \check D_\Omega$ with fibre over $(F^p) \in \check D_\Omega$ consisting of all bases $\{e_0 , \ldots , e_d\}$ of $U_\bC$ such that $Q(e_j,e_k) = \d_{j+k}^{d}$ and $F^p = \tspan\{ e_0 , \ldots , e_{d^p} \}$.  The bundle $\cE_Q$ is isomorphic to the Lie group $\tAut(U_\bC,Q)$, and so inherits the left-invariant, Maurer-Cartan form $\theta$ which takes values in the Lie algebra
\[
  \tEnd(U_\bC,Q) \ := \ \{ X \in \tEnd(U_\bC) \ | \ Q(Xu,v) + Q(u,Xv) = 0 \,,\
  \forall \ u,v \in U_\bC\}
\]
of $\tAut(U_\bC,Q)$.  There is a $G_\bC$--module decomposition $\tEnd(U_\bC,Q) = \fg_\bC \op \fg^\perp_- \op \fg^\perp_{\ge0}$; let $\eta = \theta_{\fg^\perp_-}$ be the component of $\theta$ taking value in $\fg_-^\perp$.     

\begin{mainthm}[Informal statement of Theorem \ref{T:main2}] \label{T:mt2}
Let $f : M \to \check D_\Omega$ be a holomorphic, horizontal map.  There exists $g \in \tAut(U_\bC,Q)$ so that $g \circ f(M)$ is an open subset of $\tau(\check\Omega)\subset \check D_\Omega$ if and only if $\eta$ vanishes on the pull-back $\cE_f := f^*\cE_Q \to M$.
\end{mainthm}

\noindent Roughly speaking, $\eta$ vanishes on $\cE_f$ if and only if the coefficients of the fundamental forms of $f$ agree with those of Gross's canonical CY-VHS when expressed in terms of bases $\mathbf{e} \in \cE_Q$ (Remark \ref{R:eta=0}).   Main Theorem \ref{T:mt2} is reminiscent of Green--Griffiths--Kerr's characterization of nondegenerate complex variations of quintic mirror Hodge structures by the Yukawa coupling (another differential invariant associated to a VHS) \cite[\S IV]{MR2457736}.   Both Main Theorems \ref{T:mt1} and \ref{T:mt2}, and the Green--Griffiths--Kerr characterization, are solutions to equivalence problems in the sense of \'E.~Cartan.  And from that point of view, the formulation of Main Theorem \ref{T:mt2} is standard in that it characterizes equivalence by the vanishing of a certain form on a frame bundle over $M$.

The proof of Theorem \ref{T:main2} is established by a minor modification of the arguments employed in \cite{MR3004278} (which are similar to those of \cite{MR2030098}), and is in the spirit of Cartan's approach to equivalence problems via the method of moving frames.

\begin{remark}
Mao and Sheng \cite[\S2]{MR2657440} extended Gross's construction of the canonical \emph{real} CY-VHS over a \emph{tube domain} to a canonical \emph{complex} CY-VHS over a \emph{bounded symmetric domain}.  The analogs of Theorems \ref{T:main1} and \ref{T:main2} hold for the Mao--Sheng CY-VHS as well.  Specifically, the definition of the characteristic forms holds for arbitrary (not necessarily real) VHS; and the arguments establishing the theorems do not make use of the hypotheses that the bounded symmetric domain $\Omega$ is of tube type or that the VHS is real.  As indicated by the proofs of Theorems \ref{T:main1} and \ref{T:main2}, the point at which some care must be taken is when considering the case that $\check\Omega$ is either a projective space or a quadric hypersurface.  If $\check\Omega$ is not of tube type, then it can not be a quadric hypersurface.  If $\check\Omega$ is a projective space, then $\check\Omega = \check D_\Omega$, and the theorems are trivial. 
\end{remark}

\subsection{Notation} \label{S:prelim}

Throughout $V$ will denote a real vector space, and $V_\bC$ the complexification.  All Hodge structures are assumed to be effective; that is, the Hodge numbers $h^{p,q}$ vanish if either $p$ or $q$ is negative.  Throughout $\check D$ will denote the compact dual of a period domain $D$ parameterizing effective, polarized Hodge structures of weight $n$ on $V$.  Here $D$ and $V$ are arbitrary; we will reserve $D_\Omega$ and $U$ for the period domain and vector space specific to Gross's canonical variation of Hodge structure.  We will let $Q$ denote the polarization on both $V$ and $U$, as which is meant will be clear from context.

\section{Characteristic forms} \label{S:C}

\subsection{Horizontality}

Let 
\begin{equation}\label{E:cF}
  \cF^n \ \subset \ \cF^{n-1} \ \subset \cdots \subset \ \cF^1 \ 
  \subset \cF^0
\end{equation}
denote the canonical filtration of the trivial bundle $\cF^0 = \check D \times V_\bC$ over $\check D$.  Given a holomorphic map $f : M \to \check D$, let 
\[
  \cF^p_f \ := \ f^*\cF^p
\]
denote the pull-back of the Hodge bundles to $M$.  We say that $f$ is \emph{horizontal} if it satisfies the \emph{infinitesimal period relation} (IPR) 
\begin{equation}\label{E:IPR}
  \td \cF^p_f \ \subset \ \cF^{p-1}_f \ot \Omega^1_M \,.
\end{equation}

\begin{example}
The lifted period map $\tilde \Phi : \tilde S\to D$ arising from a family $\sX \to S$ of polarized, algebraic manifolds is a horizontal, holomorphic map \cite{MR0229641, MR0233825}.
\end{example}

\subsection{Definition}

Given a horizontal map $f : M \to \check D$, the IPR \eqref{E:IPR} yields a vector bundle map
\[
  \gamma_f : T M \ \to \ \tHom(\cF^n_f,\cF^{n-1}_f/\cF^n_f) \,;
\]
sending $\xi \in T_xM$ to the linear map $\gamma_{f,x}(\xi) \in \tHom(\cF^n_{f,x},\cF^{n-1}_{f,x}/\cF^n_{f,x})$ defined as follows.  Fix a locally defined holomorphic vector field $X$ on $M$ extending $\xi = X_x$.  Given any $v_0 \in \cF^n_{f,x}$, let $v$ be a local section of $\cF^n_f$ defined in a neighborhood of $x$ and with $v(x) = v_0$.  Then 
\[
  \gamma_f(\xi)(v_0) \ := \ \left.X(v)\right|_x \ \hbox{mod} \ \cF^n_{f,x}
\]
yields a well-defined map $\gamma_f(\xi) \in \tHom(\cF^n_f,\cF^{n-1}_f/\cF^n_f)$.
More generally there is a vector bundle map
\[
  \gamma_f^k : \tSym^k T M \ \to \ 
  \tHom(\cF^n_f , \cF^{n-k}_f/\cF^{n-k+1}_f) 
\]
defined as follows.  Given $\xi_1 , \ldots , \xi_k \in T_xM$, let $X_1 , \ldots , X_k$ be locally defined holomorphic vector fields extending the $\xi_j = X_{j,x}$.  Given $v_0$ and $v$ as above, define
\begin{equation}\label{E:gammak}
  \gamma_f^k(\xi_1, \ldots ,\xi_k)(v_0) \ : = \ 
  \left.X_1\cdots X_k(v)\right|_x \ \hbox{mod} \ \cF^{n-k+1}_{f,x} \,.
\end{equation}
It is straightforward to confirm that $\gamma_f^k$ is well-defined.  This bundle map is the \emph{$k$-th characteristic form} of $f:M\to \check D$.  Let $\mathbf{C}^k_f \subset \tSym^k T^* M$ denote the image of the dual map.  In a mild abuse of terminology we will also call $\mathbf{C}^k_f$ the \emph{$k$--th characteristic forms} of $f:M \to \check D$.

\subsection{Isomorphism} \label{S:Cisom}
Given two horizontal maps $f : M \to \check D$ and $f' : M' \to \check D$, we say that the characteristic forms of $f$ at $x$ are \emph{isomorphic} to those of $f'$ at $x'$ if there exists a linear isomorphism $\lambda : T_xM \to T_{x'}M'$ such that the induced linear map $\lambda^k : \tSym^k(T^*_{x'}M') \to \tSym^k (T^*_xM)$ identifies $\mathbf{C}^k_{f',x'}$ with $\mathbf{C}^k_{f,x}$, for all $k \ge 0$.

Each $\mathbf{C}^k_{f,x}$ is a vector subspace of $\tSym^k T^*_xM$, and 
\[
  c_{f,x}^k \ := \ \tdim_\bC\,\mathbf{C}^k_{f,x} 
  \ \le \ \tdim\,\cF^{n-k}_{f,x}/\cF^{n-k+1}_{f,x}
\]
is an example of an ``integer--valued differential invariant of $f: M \to \check D$ at $x$.''  Let
\[
  \mathbf{C}_{f,x} \ := \ 
  \bigoplus_{k\ge0} \mathbf{C}^k_{f,x}\ \subset \ 
  \bigoplus_{k\ge0} \tSym^k T^*_xM \ =: \ 
  \tSym\,T^*_xM \,,
\]
and set $c_{f,x} := \tdim_\bC\,\mathbf{C}_{f,x} = \sum_{k\ge0} c^k_{f,x}$.  Regard $\mathbf{C}_{f,x}$ as an element of the Grassmannian $\tGr(c_{f,x},\tSym\,T^*_xM)$.  Note that $\tAut(T_xM)$ acts on this Grassmannian.  By \emph{integer--valued differential invariant of $f:M \to \check D$ at $x$} we mean the value at $\mathbf{C}_{f,x}$ of any $\tAut(T_xM)$--invariant integer-valued function on $\tGr(c_{f,x},\tSym\,T^*_xM)$.

A necessary condition for two characteristic forms $\mathbf{C}_{f,x}$ and $\mathbf{C}_{f',x'}$ to be isomorphic is that the integer-valued differential invariants at $x$ and $x'$, respectively, agree.

\section{Gross's canonical CY-VHS} \label{S:gross}

\subsection{Maps of Calabi-Yau type} \label{S:CY}

A period domain $D$ parameterizing effective polarized Hodge structures of weight $n$ is \emph{of Calabi--Yau type} (CY) if $h^{n,0} = 1$.  In this case we also say that the compact dual $\check D$ is of Calabi--Yau type.

A holomorphic, horizontal map $f : M \to \check D$ is of \emph{Calabi--Yau} (CY) \emph{type} if $\check D$ is CY and $\gamma_{f,x} : T_xM \to \tHom(\cF^n_{f,x} , \cF^{n-1}_{f,x}/\cF^n_{f,x})$ is a linear isomorphism for all $x \in M$.

\begin{remark} \label{R:C1}
In particular, if $f : M \to \check D$ and $f': M' \to \check D$ are CY, then the first characteristic forms $\mathbf{C}^1_{f,x}$ and $\mathbf{C}^1_{f',x'}$ are always isomorphic, for any $x \in M$ and $x'\in M'$.
\end{remark}

The condition that $h^{n,0} = \trank_\bC\,\cF^n = 1$ implies that there is an map
\[
  \pi : \check D \ \to \ \bP V_\bC
\]
sending $\phi \in D$ to $\cF^n_\phi \in \bP V_\bC$.

\subsection{Definition} 

We briefly recall Gross's canonical CY-VHS over a tube domain $\Omega = G/K$ \cite{MR1258484}.  Up to $G$--module isomorphism, there is a unique real representation 
\begin{equation}\label{E:G-Aut(U)}
  G \ \to \ \tAut(U)
\end{equation}
with the following properties:
\begin{i_list}
\item The complexification $U_\bC$ is an irreducible $G$--module.
\item The maximal compact subgroup $K \subset G$ is the stabilizer of a highest weight line $\ell \subset U_\bC$.  In particular, if $P \subset G_\bC$ is the stabilizer of $\ell$, then $K = G \cap P$, and the map $gP \mapsto g \cdot \ell \in \bP U_\bC$ is a $G_\bC$--equivariant homogeneous embedding 
\begin{equation}\label{E:sigma'}
   \sigma : \check \Omega \ \inj \ \bP U_\bC
\end{equation}
of the compact dual $\check \Omega = G_\bC/P$ of $\Omega$.
\item The dimension of $U$ is minimal amongst all $G$--modules with the two properties above.
\end{i_list}

\noindent
The maximal compact subgroup $K$ is the centralizer of a circle $\varphi : S^1 \to G$ (a homomorphism of $\bR$--algebraic groups).  The representation $U_\bC$ decomposes as a direct sum 
\begin{subequations} \label{SE:Uhd}
\begin{equation}
  U_\bC \ = \ \bigoplus_{p+q=n} U^{p,q}
\end{equation}
of $\varphi$--eigenspaces
\begin{equation}
  U^{p,q} \ := \ \{ u \in U_\bC \ | \ \varphi(z) u = z^{p-q} u \} \,.
\end{equation}
\end{subequations}
This is a Hodge decomposition, and there exists a $G$--invariant polarization $Q$ of the Hodge structure; in particular, the representation \eqref{E:G-Aut(U)} takes values in $\tAut(U,Q)$:
\begin{equation}\label{E:G-Aut(U,Q)}
  G \ \to \ \tAut(U,Q) \,.\footnote{In the terminology of \cite{MR2918237}, the triple $(U_\bC,\varphi,Q)$ defines a Hodge representation which realizes the tube domain $\Omega$ as a Mumford--Tate domain.}  
\end{equation}
Each subset $U^{p,q}$ is $K$--invariant, and so defines a $G$--homogeneous bundle $\cU^{p,q}$ over $\Omega$.  The resulting decomposition
\begin{equation}\label{E:GcVHS}
  \Omega \times U_\bC \ = \ \bigoplus \cU^{p,q}
\end{equation}
of the trivial bundle over $\Omega$ is \emph{Gross's canonical VHS over $\Omega$} \cite{MR1258484}.

\begin{example}
In the case that $\Omega$ is irreducible, Gross's canonical CY-VHS is one of the following six:
\begin{a_list}
\item
For $G = \tU(n, n) = \tAut(\bC^{2n},\sH)$, we have $U_\bC = \tw^n\bC^{2n}$ and $\check \Omega = \tGr(n,\bC^{2n})$.  If $\bC^{2n} = A \op B$ is the $\varphi$--eigenspace decomposition, then $n = \tdim\,A = \tdim\,B$ and the Hermitian form $\sH$ restricts to a definite form on both $A$ and $B$.  The Hodge decomposition is given by $U^{p,q} \simeq (\tw^pA) \ot (\tw^q B)$.
\item
For $G = \tO(2,k) = \tAut(\bR^{2+k},Q)$, we have $U_\bC = \bC^{2+k}$ and $\Omega$ is the period domain parameterizing $Q$--polarized Hodge structures on $U = \bR^{2+k}$ with $\bh = (1,k,1)$, so that $\check\Omega$ is the quadric hypersurface $\{Q=0\} \subset \bP^{k+1}$.
\item 
For $G = \tSp(2g,\bR) = \tAut(\bR^{2g},Q)$, we have $U_\bC = \tw^g\bC^{2g}$ and $\Omega$ is the period domain parameterizing $Q$--polarized Hodge structures on $\bC^{2g}$ with $\bh = (g,g)$, so that $\check \Omega$ is the Lagrangian grassmannian of $Q$--isotropic $g$--planes in $\bC^{2g}$.  Given one such Hodge decomposition $\bC^{2n} = A \op B$, the corresponding Hodge structure on $U$ is given by $U^{p,q} = (\tw^pA) \op (\tw^q B)$.
\item 
For $G = \tSO^*(2n)$, $U_\bC$ is a Spinor representation, and the summands of the Hodge decomposition are $U^{p,q} \simeq \tw^{2p}\bC^{2n}$.
\item 
If $G$ is the exceptional simple real Lie group of rank $7$ with maximal compact subgroup $K = U(1) \times_{\mu_3} E_6$, then the Hodge decomposition is $U_\bC \simeq \bC \op \bC^{27} \op (\bC^{27})^* \op \bC$.
\end{a_list}
\end{example}

\begin{lemma}[Gross {\cite{MR1258484}}] \label{L:gross}
Gross's canonical VHS \eqref{E:GcVHS} is of Calabi--Yau type \emph{(\S\ref{S:CY})}.
\end{lemma}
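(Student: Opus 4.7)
The lemma has two distinct assertions: (a) the period domain $D_\Omega$ is of Calabi--Yau type, i.e., $\dim_\bC U^{n,0} = 1$; and (b) the differential $\gamma_{\tau,x}$ of the period map $\tau : \Omega \to D_\Omega$ is a linear isomorphism at every $x \in \Omega$. My plan is to dispatch (a) by reading off the defining properties of Gross's construction, and to reduce (b) to a Lie-algebra statement at the basepoint via $G$-equivariance.

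For (a), I would argue as follows. Since $\varphi$ is central in $K$, each $\varphi$-eigenspace $U^{p,q}$ is $K_\bC$-stable. By Gross's property (ii), $K$ stabilizes a highest-weight line $\ell$ of the irreducible $G_\bC$-module $U_\bC$. Because $\fp_+$ has $\tad\varphi$-weight $+2$, highest weight vectors necessarily sit in the top $\varphi$-eigenspace, so $\ell \subseteq U^{n,0}$. Conversely, the top piece $U^{n,0}$ is $K_\bC$-invariant and, in the irreducible case, forms a single irreducible $K_\bC$-module containing $\ell$; Gross's minimality property (iii) is precisely what forces this $K_\bC$-module to coincide with $\ell$, so $\dim U^{n,0} = 1$.

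For (b), $G$-equivariance of $\tau$ reduces the assertion to the basepoint $o = eK$. Under the canonical identification $T_o^{1,0}\Omega \simeq \fp_-$ (the $-2$ eigenspace of $\tad\varphi$, equivalently $\fg_\bC^{-1,1}$), the differential $\gamma_{\tau,o}$ is the evaluation map
\[
  \fp_- \ \longrightarrow \ \tHom(U^{n,0},U^{n-1,1}) \,, \qquad X \ \longmapsto \ \bigl( v \mapsto X \cdot v \bigr).
\]
Fix $0 \neq v \in U^{n,0} = \ell$. Injectivity is immediate: if $X \cdot v = 0$ then $X$ lies in the parabolic $\fp = \fk_\bC \oplus \fp_+$ stabilizing $\ell$, and $\fp \cap \fp_- = 0$ forces $X = 0$. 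For surjectivity, $\fp_- \cdot v$ is a $K_\bC$-submodule of $U^{n-1,1}$; since $U_\bC$ is an irreducible $G_\bC$-module generated by $v$ and $\fp_-$ is abelian (Hermitian symmetric!), the natural PBW filtration has exactly $\fp_- \cdot v$ in $\varphi$-weight $n-2$, so $\fp_- \cdot v = U^{n-1,1}$.

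The step demanding the most care is surjectivity in (b): \emph{a priori} $\fp_- \cdot v$ is only some $K_\bC$-submodule of $U^{n-1,1}$, and one must rule out a proper inclusion. The way I would close this is to observe that any $K_\bC$-complement would, together with complements of $\fp_-^k \cdot v$ in the lower pieces $U^{n-k,k}$, yield a proper $G_\bC$-subrepresentation of $U_\bC$, contradicting irreducibility; this is the point at which Gross's minimality hypothesis (iii) is implicitly doing work. Alternatively, one can verify the identity $\fp_- \cdot v = U^{n-1,1}$ case by case using the explicit models listed in the preceding example.
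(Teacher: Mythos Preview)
Your approach matches the paper's: reduce to the basepoint by $G$-equivariance, identify $\ell = U^{n,0}$, and verify that $\xi \mapsto \xi(u_0)$ is an isomorphism $\fg^{-1,1}_\varphi \simeq \fp_- \to U^{n-1,1}$. The arguments you actually give are sound.

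One correction, though: the minimality condition (iii) does no work here, and your two appeals to it are misplaced. For (a), you already have everything you need from (ii) alone: since $K$ stabilizes $\ell$, the line $\ell$ is itself a $K_\bC$-submodule of the irreducible $K_\bC$-module $U^{n,0}$, which forces $U^{n,0}=\ell$. For (b), your PBW sketch uses only (i) and (ii): because $K_\bC$ stabilizes $\ell$, the nilpotent part $\fn_-\cap\fk_\bC$ annihilates $v$, whence $U_\bC = U(\fn_-)\cdot v = \tSym(\fp_-)\cdot v$ (using that $\fp_-$ is abelian), and the $\varphi$-grading then gives $U^{n-k,k} = \tSym^k(\fp_-)\cdot v$ for every $k$; in particular $U^{n-1,1} = \fp_-\cdot v$. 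Condition (iii) is a normalization picking out one representation among all those satisfying (i)--(ii); it plays no role in establishing the Calabi--Yau property.
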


\noindent The lemma follows from the well-understood representation theory associated with \eqref{E:sigma'} and \eqref{E:G-Aut(U,Q)}.  We briefly review the argument below as a means of recalling those representation theoretic properties that will later be useful.  (See \cite{MR1258484} for details.)  

Let 
\[
  \varphi \ \in \ D_\Omega
\]
denote the Hodge structure given by \eqref{SE:Uhd}.  The map
\begin{equation}\label{E:tau}
   \tau : \check \Omega \ \inj \ \check D_\Omega
\end{equation}
sending $gP \mapsto g \cdot \varphi$ is a $G_\bC$--equivariant homogeneous embedding of the compact dual $\check \Omega = G_\bC/P$.  The restriction of $\tau$ to $\Omega$ is the period map associated to Gross's canonical CY-VHS.  The precise statement of Main Theorem \ref{T:mt1} is 

\begin{theorem} \label{T:main1}
Let $f : M \inj \check D_\Omega$ be any CY map \emph{(\S\ref{S:CY})}, and let $x \in M$ be a point admitting a neighborhood in which all integer-valued differential invariants of $f$ are constant \emph{(\S\ref{S:Cisom})}.  If the characteristic forms of $f$ at $x$ are isomorphic to the characteristic forms of $\tau : \check\Omega \inj \check D_\Omega$ at $o \in \Omega$ in the sense of \emph{\S\ref{S:Cisom}}, then there exists $g \in \tAut(U_\bC)$ so that $g \circ f(M)$ is an open subset of $\tau(\check \Omega)$.
\end{theorem}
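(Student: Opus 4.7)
The plan is to reduce the statement from the period domain $\check D_\Omega$ to projective space $\bP U_\bC$, where the Hwang--Yamaguchi characterization of compact Hermitian symmetric spaces by their fundamental forms can be applied. Because $\check D_\Omega$ is of Calabi--Yau type, the projection $\pi : \check D_\Omega \to \bP U_\bC$ of \S\ref{S:CY} is available. Both $\pi \circ \tau$ and $\sigma$ are $G_\bC$--equivariant maps from $\check\Omega = G_\bC/P$ to $\bP U_\bC$ carrying the base point to $\ell = \cF^n_\varphi$, so $\pi \circ \tau = \sigma$. Since $f$ is of CY type, $\gamma_f$ is an isomorphism, hence $\pi \circ f : M \to \bP U_\bC$ is a holomorphic immersion.

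Proposition \ref{P:C=F} identifies the characteristic forms $\mathbf{C}^k_\tau$ with the projective fundamental forms of $\sigma(\check\Omega) \subset \bP U_\bC$. The same identification applies to any CY map $f$: the defining formula \eqref{E:gammak} for $\gamma^k_f$ in terms of iterated derivatives of sections of $\cF^n_f$ is precisely the recursive construction of the $k$-th fundamental form of the immersion $\pi \circ f$. Consequently, the hypothesis $\mathbf{C}_{f,x} \simeq \mathbf{C}_{\tau,o}$ becomes the statement that $\pi \circ f(M)$ and $\sigma(\check\Omega)$ have isomorphic systems of fundamental forms at $\pi \circ f(x)$ and $\sigma(o)$. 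Because the integer-valued differential invariants of $f$ are locally constant at $x$, and because $\sigma(\check\Omega)$ is homogeneous so that its fundamental forms are the same at every point, this matching persists throughout an open neighborhood $U \subset M$ of $x$.

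Hwang--Yamaguchi's theorem \cite{MR2030098} then delivers a projective-linear equivalence carrying $\pi \circ f(U)$ into $\sigma(\check\Omega)$; any lift yields $\tilde g \in \tGL(U_\bC) = \tAut(U_\bC)$ with $\tilde g \cdot \pi \circ f(U) \subset \sigma(\check\Omega)$. To upgrade this to a statement in $\check D_\Omega$, I would note that $\sigma$ is an embedding, so $\pi$ restricts to a biholomorphism between $\tau(\check\Omega)$ and $\sigma(\check\Omega)$, while horizontality together with the CY conditions on the bundle maps $\gamma^k$ (all preserved under the action of $\tilde g$) allow the full Hodge flag of $\tilde g \circ f$ to be recovered from its top piece $\cF^n_{\tilde g \circ f} = \pi \circ (\tilde g \circ f)$ by iterated differentiation. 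It then follows that $\tilde g \circ f(U) \subset \tau(\check\Omega)$, and we take $g = \tilde g$.

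The principal obstacle is invoking Hwang--Yamaguchi in the appropriate form: their reconstruction of a compact Hermitian symmetric space from the symbol algebra generated by the fundamental forms excludes two degenerate cases (projective space and the quadric hypersurface), both of which arise among Gross's $\sigma(\check\Omega)$ and must be treated separately, as foreshadowed in the remark in the introduction. A secondary issue is the lift from $\bP U_\bC$ back to $\check D_\Omega$: one must verify that once $\pi \circ (\tilde g \circ f)(U) \subset \sigma(\check\Omega)$, the entire horizontal CY map $\tilde g \circ f$ is forced to land in $\tau(\check\Omega)$, which ultimately rests on the CY hypothesis ensuring that iterated derivatives of $\cF^n$ span each successive Hodge subbundle.
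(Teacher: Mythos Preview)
Your approach is the paper's: project via $\pi$ to $\bP U_\bC$, identify characteristic forms with fundamental forms (Proposition~\ref{P:C=F} and Lemma~\ref{L:Fisom}), invoke Hwang--Yamaguchi, then lift back using Remark~\ref{R:recover}; the paper also first reduces to irreducible $\check\Omega$ and disposes of the excluded cases by observing that when $\check\Omega$ is $\bP^1$ or a quadric hypersurface one has $\check\Omega = \check D_\Omega$, so the statement is trivial.

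One step is asserted too quickly. For a general CY map $f$, the formula \eqref{E:gammak} is \emph{not} literally the fundamental form of $\pi\circ f$: both take the same iterated derivative of a section of $\cF^n_f$, but $\gamma^k_f$ reduces modulo $\cF^{n-k+1}_f$ while $\psi^k_{\pi\circ f}$ reduces modulo the (a priori smaller) $\cT^{k-1}_{\pi\circ f}$. The unnamed lemma in \S\ref{S:CvF} gives only $\cT^{n-k}_{\pi\circ f}\subset\cF^k_f$, whence $\mathbf{C}^k_f\subset\mathbf{F}^k_{\pi\circ f}$. Equality, and with it the identity $\cT^{n-k}_{\pi\circ f}=\cF^k_f$ you need for the lift, comes from a dimension count: the hypothesis $\mathbf{C}^k_f\simeq\mathbf{C}^k_\tau$ together with Corollary~\ref{C:O-F=E} gives $\dim\mathbf{C}^k_f=h^{n-k,k}$, so $\dim\mathbf{F}^k_{\pi\circ f}\ge h^{n-k,k}$, while $\cT^k_{\pi\circ f}\subset\cF^{n-k}_f$ forces $\sum_{j\le k}\dim\mathbf{F}^j_{\pi\circ f}\le\sum_{j\le k}h^{n-j,j}$; together these pin down equality everywhere. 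In particular, it is the isomorphism of characteristic forms, not the CY hypothesis alone, that guarantees iterated derivatives of $\cF^n_f$ span each successive Hodge subbundle.
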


\noindent The theorem is proved in \S\ref{S:CvF}.

\begin{remark}  \label{R:main1}
To see how Main Theorem \ref{T:mt1} follows from Theorem \ref{T:main1} we make precise the hypothesis that ``the characteristic forms of $f$ and $\tau$ are isomorphic'':  by this, we mean that there exists a local biholomorphism $i : M \to \check\Omega$ so that the characteristic forms of $f$ at $x \in M$ are isomorphic to those of $\tau$ at $i(x)$ for all $x\in M$ (\cf\S\ref{S:Cisom}).  (Equivalently, since $\check\Omega$ is homogeneous, the characteristic forms of $f$ at $x \in M$ are isomorphic to those of $\tau$ at $o$ for all $x \in M$.)  Given this definition, it is clear that the hypotheses of Main Theorem \ref{T:mt1} imply those of Theorem \ref{T:main1}.
\end{remark}

\begin{proof}[Proof of Lemma \ref{L:gross}]
Let
\[
  \bh_\Omega \ = \ (h^{p,q}_\Omega = \tdim_\bC\,U^{p,q})
\]
denote the Hodge numbers, and let $D_\Omega$ denote the period domain parameterizing $Q$--polarized Hodge structures on $U$ with Hodge numbers $\bh_\Omega$.  The weight $n$ of the Hodge structure is the rank of $\Omega$, and the highest weight line stabilized by $K$ is 
\begin{equation}\label{E:ell}
  \ell \ = \ U^{n,0} \,.
\end{equation}
In particular, 
\begin{equation}\label{E:h=1}
  h^{n,0} \ = \ 1 \,.
\end{equation}
Let 
\[
  0 \ \subset \ \cF^n_\Omega \ \subset \ \cF^{n-1}_\Omega \ \subset \,\cdots\,
  \subset \ \cF^1_\Omega \ \subset \ \cF^0_\Omega 
\]
denote the canonical filtration \eqref{E:cF} of the trivial bundle $\cF^0_\Omega = \check D_\Omega \times U_\bC$ over $\check D_\Omega$.  Then
\[
  \left.\cF^p_\Omega\right|_{\tau(\Omega)} \ = \ 
  \bigoplus_{r\ge p} \cU^{r,n-r} \,.
\]

We will identify 
\[
  o \ = \ K/K \in \Omega \ = \ G/K
\]
with $P/P \in \check\Omega = G_\bC/P$.  Note that 
\[
  \varphi \ = \ \tau(o) \,.
\]
The weight zero Hodge decomposition 
\begin{equation}\label{E:ghd}
  \fg_\bC \ = \ \fg^{1,-1}_\varphi \,\op\, \fg^{0,0}_\varphi\,\op\,
  \fg^{-1,1}_\varphi
\end{equation}
induced by $\varphi$ has the property that $\fp = \fg^{1,-1}_\varphi \op \fg^{0,0}_\varphi$ and $\fk_\bC = \fg^{0,0}_\varphi$ are the Lie algebras of $P$ and $K_\bC$, respectively.  Consequently, the holomorphic tangent space is given by
\begin{equation}\label{E:TO}
  T_o\Omega \ = \ T_o\check\Omega \ = \ \fg_\bC/\fp \ \simeq \ 
  \fg^{-1,1}_\varphi \,.
\end{equation}
Regarding $\fg^{-1,1}_\varphi$ as a subspace of $\tEnd(U_\bC,Q)$ we have 
\begin{equation}\label{E:ontoU}
  U^{p-1,q+1} \ = \ \fg^{-1,1}_\varphi (U^{p,q}) \ := \ 
  \{ \xi(u) \ | \ \xi \in \fg^{-1,1}_\varphi \,,\ u \in U^{p,q} \} \,.
\end{equation}
In particular, given $\xi \in \fg^{-1,1}_\varphi$, we have 
\begin{equation}\label{E:x(U)}
  \xi(U^{p,q}) \ \subset \ U^{p-1,q+1} \,.
\end{equation}
The maps
\begin{subequations}\label{SE:psipq}
\begin{equation}
  \psi^{p,q}_\Omega : \fg^{-1,1}_\varphi \ \times \ U^{p,q} \ \to \ U^{p-1,q+1}
\end{equation}
sending
\begin{equation}
  (\xi,u) \ \mapsto \ \xi(u)
\end{equation}
\end{subequations}
are surjective.  Moreover, given fixed nonzero $u_0 \in U^{n,0}$, the map $\fg^{-1,1}_\varphi \to U^{n-1,1}$ sending $\xi \mapsto \xi(u_0)$ is an isomorphism.  It follows from the homogeneity of the bundles $\cF^p_\Omega$, and the $G_\bC$--equivariance of $\tau$, that $\tau$ is horizontal and of Calabi--Yau type.
\end{proof}

\subsection{Characteristic forms} \label{S:cfO}

In this section we describe the characteristic forms $\gamma^k_\Omega$ of \eqref{E:tau}.  The discussion will make use of results reviewed in the proof of Lemma \ref{L:gross}.

Since $\tau$ is $G_\bC$--equivariant and the bundles $\cF^p_\Omega \to \check D_\Omega$ are $\tAut(U_\bC,Q)$--homogeneous, we see that the push-forward $g_*: T_o \check \Omega \to T_{g \cdot o} \check \Omega$ is an isomorphism identifying $\mathbf{C}^k_{\tau,g\cdot o}$ with $\mathbf{C}^k_{\tau,o}$ for all $k$ and $g \in G_\bC$; that is, the characteristic forms of $\tau$ at $g \cdot o$ are isomorphic to those at $o$.  So it suffices to describe the characteristic forms at the point $o \in \Omega$.  It follows from $\cF^p_{\Omega,o}/\cF^{p+1}_{\Omega,o} = U^{p,n-p}$, the identification \eqref{E:TO}, and \eqref{E:x(U)} that $\gamma^k_{\Omega,o} : \tSym^kT_o\check\Omega \to \tHom(\cF^n_{\Omega,o} , \cF^{n-k}_{\Omega,o}/\cF^{n-k+1}_{\Omega,o})$ may be identified with the map
\begin{subequations}\label{SE:gammaO}
\begin{equation}
  \gamma^k_{\Omega,o} : \tSym^k\fg^{-1,1}_\varphi \ \to \ 
  \tHom(U^{n,0},U^{n-k,k}) 
\end{equation}
defined by 
\begin{equation}
  \gamma^k_{\Omega,o}(\xi_1\cdots\xi_k)(u) \ = \ 
  \xi_1 \cdots \xi_k(u)\,,
\end{equation}  
\end{subequations}
with $\xi_1 , \ldots , \xi_k \in \fg^{-1,1}_\varphi \subset \tEnd(U_\bC,Q)$ and $u \in U^{n,0}$.
 
\section{Proof of Theorem \ref{T:main1} (Main Theorem \ref{T:mt1})}

\subsection{The osculating filtration} \label{S:osc}

Let $X \inj \bP V_\bC$ be any complex submanifold.  The \emph{osculating filtration at $x \in X$} 
\[
  \cT_x^0 \ \subset \ \cT_x^1 \ \subset \cdots \subset \ \cT_x^m 
  \ \subset \ V_\bC
\]
is defined as follows.  First, $\cT_x^0 \subset V_\bC$ is the line parameterized by $x \in \bP V_\bC$.  Let $\widehat X \subset V_\bC\backslash\{0\}$ be the cone over $X$.  Let $\Delta = \{ z \in \bC \ : \ |z| < 1 \}$ denote the unit disc, and let $\cO(\Delta,0;\widehat X,x)$ denote the set of holomorphic maps $\a : \Delta \to \widehat X$ with $\a(0) \in \cT_x^0$.  Given one such curve, let $\a^{(k)}$ denote the $k$--th derivative $\td^k\a/\td z^k$.  Inductively,
\[
  \cT_x^k \ = \ \cT_x^{k-1} \ + \ 
  \tspan_\bC\{ \a^{(k)}(0) \ | \ \a \in \cO(\Delta,0;X,x) \} \,.
\]
Note that $\cT_x^1 = T_u \widehat X$ is the embedded tangent space at $u \in \cT_x^0$.  Here $m = m(x)$ is determined by $\cT_x^{m-1} \subsetneq \cT_x^m = \cT_x^{m+1}$.  

\subsection{Fundamental forms} \label{S:F}

If both $m$ and the rank of $\cT_x^k$ are independent of $x$, then the osculating filtrations define a a filtration $\cT^0_X \subset \cT^1_X \subset \cdots \subset \cT^m_X \subset X \times V_\bC$ of the trivial bundle over $X$.  Assume this is the case.  By construction the osculating filtration satisfies 
\begin{equation}\label{E:osc}
  \td\cT^k \ \subset \ \cT^{k+1} \ot \Omega^1_X
\end{equation}
Just as the IPR \eqref{E:IPR} lead to the characteristic forms \eqref{E:gammak}, the relation \eqref{E:osc} yields bundle maps
\[
  \psi^k_X : \tSym^k TX \ \to \ \tHom(\cT^0_X , \cT^k_X/\cT^{k-1}_X) \,,\quad k \ge 1 \,.
\]
This is the \emph{$k$--th fundamental form} of $X \inj \bP V_\bC$.  The image $\mathbf{F}^k_X \subset \tSym^k T^*X$ of the dual map is a vector subbundle of
\[
  \trank\,\mathbf{F}^k_X \ = \ \tdim\,\cT_x^k/\cT^{k-1}_x \,.
\]
Again, in mild abuse of terminology, we will call $\mathbf{F}^k_X$ the \emph{$k$--th fundamental forms of $X \subset \bP V_\bC$}.

Given two complex submanifolds $X,X' \inj \bP V_\bC$, we say that the fundamental forms of $X$ at $x$ are \emph{isomorphic} to those of $X'$ at $x'$ if there exists a linear isomorphism $\lambda : T_xX \to T_{x'}X'$ such that the induced linear map $\tSym\,T^*_{x'}X' \to \tSym\,T^*_xX$ identifies $\mathbf{F}^k_{X',x'}$ with $\mathbf{F}^k_{X,x}$.

Each $\mathbf{F}^k_{X,x}$ is a vector subspace of $\tSym^k T^*_xX$, and $d_{X,x}^k := \tdim_\bC\,\mathbf{F}^k_{X,x}$ is an example of an ``integer--valued differential invariant of $X \inj \bP V_\bC$ at $x$.''  Let
\[
  \mathbf{F}_{X,x} \ := \ 
  \bigoplus_{k\ge0} \mathbf{F}^k_{X ,x}\ \subset \ 
  \bigoplus_{k\ge0} \tSym^k T^*_xX \ =: \ 
  \tSym\,T^*_xX \,,
\]
and set $d_{X,x} := \tdim_\bC\,\mathbf{F}_{X,x} = \sum_{k\ge0} d^k_{X,x}$.  Regard $\mathbf{F}_{X,x}$ as an element of the Grassmannian $\tGr(d_{X,x},\tSym\,T^*_xX)$.  Note that $\tAut(T_xX)$ acts on this Grassmannian.  By \emph{integer--valued differential invariant of $X \inj \bP V_\bC$ at $x$} we mean the value at $\mathbf{F}_{X,x}$ of any $\tAut(T_xX)$--invariant integer-valued function on $\tGr(d_{X,x},\tSym\,T^*_xX)$.

A necessary condition for two fundamental forms $\mathbf{F}_{X,x}$ and $\mathbf{F}_{X',x'}$ to be isomorphic is that the integer-valued differential invariants at $x$ and $x'$, respectively, agree.

\begin{remark}
When $X \inj \bP V_\bC$ is a homogeneous embedding of a compact Hermitian symmetric space (such as the $\s : \check \Omega \inj \bP U_\bC$ of \eqref{E:sigma'}), there are only finitely many $\tAut(T_o\check\Omega)$--invariant integer-valued functions on $\tGr(d_{\s,o},\tSym\,T^*_o\check\Omega)$, and they distinguish/characterize the $\tAut(T_o\check\Omega)$--orbits \cite[Proposition 5]{MR2030098}.
\end{remark}

\subsection{Fundamental forms for $\s:\check\Omega\inj \bP U_\bC$}

Recall the maps $\sigma$ and $\tau$ of \eqref{E:sigma'} and \eqref{E:tau}, respectively.  Theorem \ref{T:HY} asserts that the Hermitian symmetric $\sigma(\check\Omega) \subset \bP U_\bC$ are characterized by their fundamental forms, up to the action of $\tAut(U_\bC)$.

\begin{theorem}[Hwang--Yamaguchi {\cite{MR2030098}}] \label{T:HY}
Assume that the compact dual $\check \Omega$ contains neither a projective space nor a quadric hypersurface as an irreducible factor.  Let $M \subset \bP U_\bC$ be any complex manifold, and let $x \in M$ be a point in a neighborhood of which all integer-valued differential invariants are constant.  If the fundamental forms of $M$ at $x$ are isomorphic to the fundamental forms of $\s : \check\Omega \inj \bP U_\bC$ at $o$, then $M$ is projective-linearly equivalent to an open subset of $\check\Omega$.
\end{theorem}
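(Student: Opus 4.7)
The plan is to prove Theorem \ref{T:HY} via \'E.~Cartan's method of equivalence, adapted to the parabolic geometry encoded by the fundamental forms. The hypothesis that all integer-valued differential invariants are locally constant near $x$ ensures that the osculating filtration $\cT^0_M \subset \cT^1_M \subset \cdots \subset \cT^m_M$ has locally constant ranks in a neighborhood $U \ni x$, so it defines a genuine filtration of $U \times U_\bC$ by holomorphic subbundles; the isomorphism of fundamental forms at $x$ then transports the associated graded space $\bigoplus_k \cT^k_x/\cT^{k-1}_x$ compatibly with its counterpart at $o \in \check\Omega$.

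Since $\check\Omega = G_\bC/P$ is Hermitian symmetric, $\fg_\bC$ carries an abelian $|1|$-grading $\fg_\bC = \fg_{-1} \oplus \fg_0 \oplus \fg_1$ with $\fp = \fg_0 \oplus \fg_1$. In the minimal homogeneous embedding $\s : \check\Omega \inj \bP U_\bC$, the $k$-th osculating space at $o$ is the image $\tSym^k(\fg_{-1}) \cdot u_0$ of iterated $\fg_{-1}$-action on a highest weight vector $u_0 \in \ell = U^{n,0}$, and the fundamental form $\mathbf{F}^k_{\s,o}$ is dual to the canonical multiplication $\tSym^k(\fg_{-1}) \to U^{n-k,k}$. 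I would transfer this symbol algebra structure to a frame bundle $\cE_M \to U$ with structure group modeled on $G_0 \subset P$, and construct by successive order-by-order normalization a $\fg_\bC$-valued Cartan connection $\omega$ whose $\fg_{-1}$-component solders to $TM$ and whose higher components are pinned down by matching the fundamental forms of $M$ to those of $\s$.

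The heart of the proof is a Tanaka-type prolongation vanishing. Under the hypothesis that no irreducible factor of $\check\Omega$ is $\bP^n$ or a quadric hypersurface, the algebraic prolongation of $\fg_{-1}$ together with the $\fg_0$-subalgebra preserving the fundamental forms is exactly $\fg_\bC$, and the positive-degree Spencer/Lie-algebra cohomology $H^{\ge 1}(\fg_{-1},\fg_\bC)$ vanishes in the relevant obstruction bidegrees by Kostant's theorem. This both forces the Cartan connection $\omega$ to exist uniquely and forces its curvature $\td\omega + \tfrac{1}{2}[\omega,\omega]$ to vanish identically. Frobenius then integrates $\omega$ to a local $G_\bC$-equivariant map $\cE_M \to G_\bC$ covering a local biholomorphism $U \to \check\Omega$; because $\omega$ was built from data intrinsic to the projective embedding (the osculating flag in $U_\bC$), this biholomorphism is in fact realized by a linear automorphism $g \in \tAut(U_\bC)$ carrying $U$ onto an open subset of $\check\Omega$.

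The main obstacle is the prolongation vanishing itself, where the two excluded classes enter essentially: for $\check\Omega = \bP^n$ the prolongation is infinite-dimensional (the full projective Lie algebra $\fsl(n+1,\bC)$ and its derivations), while for quadrics the second prolongation is nontrivial (reflecting the conformal Killing equation), and both classes admit genuine deformations with isomorphic fundamental forms that are not projectively congruent to the model. In every remaining case the vanishing must be verified factor by factor, identifying each obstruction module with a specific irreducible $\fg_0$-representation via Kostant's theorem and confirming its absence; this representation-theoretic bookkeeping, although algorithmic, is the technical core of the argument.
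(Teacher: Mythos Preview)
The paper does not prove Theorem \ref{T:HY}; it is quoted from \cite{MR2030098} and used as a black box.  The closest analogue in the paper is the proof of Theorem \ref{T:main2} in \S\ref{S:prf2}, which follows the Hwang--Yamaguchi moving-frames argument, so that is the natural point of comparison.  Your outline has the right ingredients (Kostant's theorem, the exclusion of projective and quadric factors, Frobenius integration) but misidentifies the relevant cohomology and, more seriously, inserts an unjustified flatness step.

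The obstructions to the inductive frame-bundle reduction live in the positively graded part of $H^1(\fg_{-1},\fg^\perp)$, where $\fg^\perp$ is a $G_\bC$-complement to $\fg_\bC$ inside $\fgl(U_\bC)$ --- not in $H^1(\fg_{-1},\fg_\bC)$; compare \S\ref{S:prf2} and \cite[Proposition~7]{MR2030098}.  Your prolongation statement (that the prolongation of $\fg_{-1}$ together with the $\fg_0$-stabilizer of the fundamental forms is exactly $\fg_\bC$) is correct and is essentially equivalent to this vanishing, but the cohomology group you name is the wrong one.  The more substantive gap is the claim that this vanishing ``forces the curvature $\td\omega + \tfrac12[\omega,\omega]$ to vanish identically.''  Vanishing of $H^1$ in positive degrees yields existence and uniqueness of a normal Cartan connection on an abstract $\fg_0$-structure; it says nothing about flatness, whose obstruction lives in $H^2$.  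In the actual argument one never constructs an abstract Cartan connection and then proves it flat: one works throughout with the pullback to $M$ of the Maurer--Cartan form $\theta$ of $\tGL(U_\bC)$, which already satisfies $\td\theta=-\tfrac12[\theta,\theta]$.  The cohomology vanishing is used only to show that the $\fg^\perp$-component of $\theta$ can be normalized to zero on a subbundle (Claims \ref{cl:lambda}--\ref{cl:im} in \S\ref{S:prf2} carry over verbatim with $\tEnd(U_\bC,Q)$ replaced by $\fgl(U_\bC)$).  Once that is done, $\{\theta_{\fg^\perp}=0\}$ is Frobenius because $[\fg_\bC,\fg_\bC]\subset\fg_\bC$, its maximal leaves are the $\tGL(U_\bC)$-translates of $G_\bC$, and the projective-linear equivalence follows directly.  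Flatness is built in from the start; recasting the problem as an abstract parabolic geometry introduces a step your sketch does not close.
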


\begin{proposition} \label{P:C=F}
The $k$--th characteristic form $\gamma^k_\Omega$ of $\tau : \check\Omega \inj \check D_\Omega$ coincides with the the $k$--th fundamental form $\psi^k_\Omega$ of $\sigma : \check\Omega \inj \bP U_\bC$.
\end{proposition}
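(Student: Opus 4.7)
The plan is to exploit the $G_\bC$-equivariance of both $\sigma$ and $\tau$ in order to reduce the claim to the base point $o \in \check\Omega$, and then to match the two forms via the explicit iterated-action formula \eqref{SE:gammaO}. Both $\sigma$ and $\tau$ are $G_\bC$-equivariant, and both the Hodge bundles $\cF^p_\Omega$ over $\check D_\Omega$ (pulled back by $\tau$) and the osculating bundles $\cT^p$ of $\sigma(\check\Omega) \subset \bP U_\bC$ are $G_\bC$-homogeneous; so it suffices to identify $\gamma^k_{\Omega,o}$ with $\psi^k_{\sigma,o}$ after making the canonical identifications of initial data and tangent spaces at $o$.

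For the initial data, observe that $\pi \circ \tau = \sigma$, since $\pi(g \cdot \varphi) = g \cdot \cF^n_\varphi = g \cdot \ell$. In particular $\cT^0_{\sigma,o} = \ell = U^{n,0} = \cF^n_{\Omega,\varphi}$, and the identification \eqref{E:TO} gives $T_o\check\Omega \simeq \fg^{-1,1}_\varphi$ in both settings. Since $\gamma^k_{\Omega,o}$ is already given by \eqref{SE:gammaO}, the heart of the proof is the computation of the osculating filtration and fundamental forms of $\sigma(\check\Omega) \subset \bP U_\bC$ at $\ell$. Given $X \in \fg^{-1,1}_\varphi$ and a fixed nonzero $u_0 \in U^{n,0}$, the holomorphic curve $\alpha(z) := \exp(zX)\cdot u_0$ lies in the affine cone $\widehat{\sigma(\check\Omega)}$, and iterated application of \eqref{E:x(U)} yields $\alpha^{(k)}(0) = X^k u_0 \in U^{n-k,k}$. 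Polarizing and using the surjectivity in \eqref{SE:psipq} inductively on $k$, I would conclude
\[
  \cT^k_{\sigma,o} \ = \ \bigoplus_{j \le k} U^{n-j,j} \ = \ \cF^{n-k}_{\Omega,\varphi},
\]
and read off $\psi^k_{\sigma,o}: \tSym^k \fg^{-1,1}_\varphi \to \cT^k_{\sigma,o}/\cT^{k-1}_{\sigma,o}$ as the map $\xi_1 \cdots \xi_k \mapsto \xi_1 \cdots \xi_k(u_0) \bmod \cT^{k-1}_{\sigma,o}$. Under the identification $\cF^n_{\Omega,\varphi} \simeq \bC$ sending $u_0 \mapsto 1$, this agrees with $\gamma^k_{\Omega,o}$ as formulated in \eqref{SE:gammaO}.

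The step requiring the most care will be justifying that $\cT^k_{\sigma,o}$ is in fact spanned modulo $\cT^{k-1}_{\sigma,o}$ by the $k$-th derivatives of the one-parameter subgroup curves. A general holomorphic curve in $\widehat{\sigma(\check\Omega)}$ through a point of $\ell$ has the form $\exp(c(z))\cdot u_0$ with $c(z) \in \fg_\bC$ and $c(0) \in \fp$, so I would need to show that the $\fp$-components of $c$ contribute only to strictly lower-order pieces of the filtration. This follows from $\fp \cdot u_0 \subset \bC u_0$ (because $u_0$ is a highest weight vector stabilized by $P$), combined with a Leibniz-type expansion of $\exp(c(z))\cdot u_0$; it is essentially the only calculation in the argument that cannot be absorbed into the representation-theoretic facts reviewed in the proof of Lemma~\ref{L:gross}.
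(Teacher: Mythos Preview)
Your proposal is correct and follows the same approach as the paper: reduce to the base point $o$ via $G_\bC$--equivariance and homogeneity, and then identify both forms with the iterated action \eqref{SE:gammaO} of $\fg^{-1,1}_\varphi$ on $U^{n,0}$.  The paper's own proof is a two-line sketch (``definition chasing'') that simply asserts the fundamental form at $o$ agrees with \eqref{SE:gammaO} from the definition of $\sigma$ and the identifications \eqref{E:ell}, \eqref{E:TO}; your version fills in exactly the computation the paper omits, namely the explicit determination of the osculating spaces $\cT^k_{\sigma,o} = \cF^{n-k}_{\Omega,\varphi}$ via one-parameter curves $\exp(zX)u_0$ together with the verification (your final paragraph) that general curves contribute nothing beyond $\bigoplus_{j\le k} U^{n-j,j}$.
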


\begin{proof}
The proof is definition chasing.  Since both the Hodge bundles $\cF_\Omega^p$ and the osculating filtration $\cT_\Omega^k$ are homogeneous, and the maps $\s$ and $\tau$ are $G_\bC$--equivariant, it suffices to show that $\gamma^k_{\Omega,o} = \psi^k_{\Omega,o}$ at the point $o = P/P \in \check\Omega$.  The former is computed in \S\ref{S:cfO}; so it suffices to compute the latter and show that $\psi^k_{\Omega,o}$ agrees with \eqref{SE:gammaO}.  This follows directly from the the definition $\s(gP) = g \cdot \ell$ and the identifications \eqref{E:ell} and \eqref{E:TO}.
\end{proof}

\begin{remark}
A more detailed discussion of the fundamental forms of compact Hermitian symmetric spaces (such as $\check\Omega$) may be found in \cite[\S3]{MR2030098}
\end{remark}

\begin{corollary}\label{C:O-F=E}
The Hodge filtration $\left.\cF^p_\Omega\right|_{\tau(\check\Omega)}$ agrees with the osculating filtration $\cT^{n-p}_{\sigma(\check\Omega)}$.
\end{corollary}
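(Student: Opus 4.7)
The plan is to check the equality fiber by fiber, reduce to the base point $o = P/P \in \check\Omega$ by homogeneity, and then read off the point-wise equality from Proposition \ref{P:C=F}. Both the pull-back Hodge filtration (via $\tau$) and the osculating filtration (via $\sigma$) determine $G_\bC$-homogeneous subbundles of the trivial $U_\bC$-bundle over $\check\Omega$, because $\cF^p_\Omega \to \check D_\Omega$ is $\tAut(U_\bC,Q)$-homogeneous, $\cT^k_{\sigma(\check\Omega)} \to \sigma(\check\Omega)$ is $\tAut(U_\bC)$-homogeneous, and both $\sigma$ and $\tau$ are $G_\bC$-equivariant. So it suffices to prove $\cT^k_o = \cF^{n-k}_{\Omega,o}$ as subspaces of $U_\bC$, for every $k \ge 0$.

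First I would establish the inclusion $\cT^k_o \subset \cF^{n-k}_{\Omega,o}$ using horizontality of $\tau$. Since $\sigma = \pi\circ\tau$, any holomorphic curve $\a : \Delta \to \widehat{\sigma(\check\Omega)}$ with $\a(0) \in \ell$ may be regarded as a local holomorphic section of $\cF^n_\Omega$ along the induced curve $\pi\circ\a$ in $\tau(\check\Omega) \subset \check D_\Omega$. The IPR \eqref{E:IPR} forces $\a'(0) \in \cF^{n-1}_{\Omega,o}$, and iterating, $\a^{(k)}(0) \in \cF^{n-k}_{\Omega,o}$; summing over such curves yields $\cT^k_o \subset \cF^{n-k}_{\Omega,o}$.

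Equality would then follow by induction on $k$. The base case $\cT^0_o = \ell = U^{n,0} = \cF^n_{\Omega,o}$ is \eqref{E:ell}. For the inductive step, assuming $\cT^{k-1}_o = \cF^{n-k+1}_{\Omega,o}$, I would invoke Proposition \ref{P:C=F} to identify the fundamental form $\psi^k_{\Omega,o}$ with the characteristic form $\gamma^k_{\Omega,o}$. The construction of the osculating filtration guarantees that $\psi^k_{\Omega,o}$, viewed as a map $\tSym^k T_o\check\Omega \otimes \cT^0_o \to \cT^k_o/\cT^{k-1}_o$, is surjective onto its codomain; and by the explicit formula \eqref{SE:gammaO} together with the surjectivity of the multiplication maps $\psi^{p,q}_\Omega$ recorded in the proof of Lemma \ref{L:gross}, the image of $\gamma^k_{\Omega,o}$ acting on $U^{n,0}$ is exactly $U^{n-k,k} = \cF^{n-k}_{\Omega,o}/\cF^{n-k+1}_{\Omega,o}$. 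Combining these with the inductive hypothesis and the inclusion established in the previous paragraph, I would conclude $\cT^k_o = \cF^{n-k}_{\Omega,o}$, closing the induction.

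Once Proposition \ref{P:C=F} is in hand the argument is essentially bookkeeping, so I do not anticipate a substantive obstacle. The one small point requiring attention is the implicit appeal to local constancy of the ranks of $\cT^k$, needed so that the osculating filtration is a genuine bundle and the fundamental form is well-defined; this is automatic from the $G_\bC$-homogeneity of $\sigma(\check\Omega) \subset \bP U_\bC$.
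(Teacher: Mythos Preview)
Your argument is correct and is precisely the unpacking the paper leaves implicit: the corollary is stated without proof as an immediate consequence of Proposition~\ref{P:C=F}, and your induction (horizontality of $\tau$ for the inclusion $\cT^k_o \subset \cF^{n-k}_{\Omega,o}$, surjectivity from \eqref{E:ontoU} for the reverse) is exactly how one extracts the filtration equality from the computation of both forms by the common formula \eqref{SE:gammaO}. The only caveat is to be careful not to read Proposition~\ref{P:C=F} as presupposing the equality of codomains you are trying to prove; your phrasing avoids this by invoking the inductive hypothesis to identify the quotients before comparing the two forms, which is the right way to do it.
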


\subsection{Characteristic versus fundamental forms} \label{S:CvF}

\begin{lemma}
Let $f: M \inj \check D$ be a CY map \emph{(\S\ref{S:CY})}.  Let $\pi : \check D \to \bP V_\bC$ be the projection of \S\ref{S:CY}.  Then $\cT^{n-k}_{\pi\circ f,x} \subset \cF^k_{f,x}$ for all $x \in M$.
\end{lemma}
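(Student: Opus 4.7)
The plan is to prove this by induction on the order of the osculating filtration, using horizontality to descend one level of the Hodge filtration per derivative. The projection $\pi \circ f : M \to \bP V_\bC$ sends $y \in M$ to the line $\cF^n_{f,y}$, and any local holomorphic section $v$ of $\cF^n_f$ (which, by the CY hypothesis $h^{n,0}=1$, is a line bundle) provides a local holomorphic lift of $\pi \circ f$ into $V_\bC \setminus \{0\}$, and hence into the cone $\widehat{\pi\circ f(M)}$.

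The key observation is the following claim, which I would prove by induction on $j \ge 0$:
\emph{for any local holomorphic vector fields $X_1,\dots,X_j$ on $M$ and any local section $v$ of $\cF^n_f$, the vector $(X_j\cdots X_1 v)(x)$ lies in $\cF^{n-j}_{f,x}$.} The base case $j=0$ is immediate from $v(x) \in \cF^n_{f,x}$. For the inductive step, by horizontality \eqref{E:IPR}, if $w := X_{j-1}\cdots X_1 v$ is (by induction) a local section of $\cF^{n-j+1}_f$, then $\td w \in \cF^{n-j}_f \otimes \Omega^1_M$, so $X_j w$ is a local section of $\cF^{n-j}_f$; evaluating at $x$ gives the claim.

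Given the claim, it remains to observe that any curve $\alpha \in \cO(\Delta,0;\widehat{\pi\circ f(M)},\pi\circ f(x))$ arises, up to the evaluation of its derivatives, from such repeated differentiations. Concretely, $\alpha$ may be written as $\alpha(t) = v(\gamma(t))$ for a local holomorphic curve $\gamma : \Delta \to M$ with $\gamma(0)=x$ and a local section $v$ of $\cF^n_f$ with $v(x) = \alpha(0)$ (this uses that the cone over $\pi\circ f(M)$ near $\alpha(0)$ is swept out by the sections of $\cF^n_f$). The chain rule expresses $\alpha^{(j)}(0)$ as a universal polynomial in the derivatives $\gamma^{(i)}(0)$ and the iterated derivations $(X_{i_1}\cdots X_{i_\ell} v)(x)$ for $\ell \le j$, where the $X_{i_m}$ extend $\gamma'(0)$ and its higher-order analogues. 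By the claim, each such iterated derivation of order $\ell \le j$ lies in $\cF^{n-\ell}_{f,x} \subset \cF^{n-j}_{f,x}$, and hence so does $\alpha^{(j)}(0)$.

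Since $\cT^{n-k}_{\pi\circ f,x}$ is by definition spanned by $\alpha^{(j)}(0)$ for $j \le n-k$ over all such $\alpha$, we conclude $\cT^{n-k}_{\pi\circ f,x} \subset \cF^k_{f,x}$. I do not anticipate a serious obstacle; the one place deserving care is the representation of an arbitrary curve in the cone as the pullback $v \circ \gamma$, which holds locally because the fibres $\cF^n_{f,y}$ vary holomorphically as a line subbundle of the trivial bundle and hence admit local trivializing sections near any given point of the cone.
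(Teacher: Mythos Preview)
Your argument is correct and is precisely the definition-chase the paper has in mind (the paper's own proof is the single sentence ``This follows directly from the definitions of horizontality, Calabi--Yau type, and the osculating filtration''). One small point of care: a general curve in the cone is $\alpha(t) = c(t)\,v(\gamma(t))$ for some holomorphic scalar $c$ with $c(0)=1$, since a section $v$ of $\cF^n_f$ over $M$ cannot absorb the fibrewise $\bC^*$--freedom; but Leibniz then still expresses $\alpha^{(j)}(0)$ as a linear combination of the $(X_\ell\cdots X_1 v)(x)$ with $\ell\le j$, so your inductive claim gives the conclusion unchanged.
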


\begin{proof}
This follows directly from the definitions of horizontality (\S\ref{S:prelim}) and Calabi--Yau type (\S\ref{S:CY}), and the osculating filtration (\S\ref{S:osc}).
\end{proof}

\begin{remark} \label{R:recover}
Let $f : M \inj \check D$ be a CY map, and recall the projection $\pi : \check D \to \bP V_\bC$ of \S\ref{S:CY}.  By definition $f(x) = \cF^\sb_{f,x}$.  So, if the Hodge and osculating filtrations agree, $\cF^k_{f,x} = \cT^{n-k}_{\pi\circ f,x}$, then we can recover $f$ from $\pi \circ f$.
\end{remark}

\begin{lemma} \label{L:C=F}
Let $f : M \inj \check D$ be a CY map.  If $\cT^{n-k}_{\pi\circ f, x} = \cF^k_{f,x}$ for all $x \in M$, then the characteristic and fundamental forms agree, $\mathbf{C}^{k}_f = \mathbf{F}^{n-k}_f$.
\end{lemma}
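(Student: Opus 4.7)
The plan is a definition chase, in the spirit of the proof of Proposition \ref{P:C=F}. First I would use the hypothesis $\cF^k_{f,x} = \cT^{n-k}_{\pi\circ f, x}$ to identify the target spaces of the $k$--th characteristic form $\gamma^k_f$ and the fundamental form $\psi^k$ of the image submanifold $\pi\circ f(M)\subset \bP V_\bC$: one has $\cF^n_{f,x} = \cT^0_{\pi\circ f,x}$ and $\cF^{n-k}_{f,x}/\cF^{n-k+1}_{f,x} = \cT^k_{\pi\circ f,x}/\cT^{k-1}_{\pi\circ f,x}$, so the two relevant Hom-bundles coincide. Because $f$ is of Calabi--Yau type, the first characteristic form $\gamma^1_f$ is an isomorphism; this forces $\pi\circ f$ to be an immersion near $x$ and makes $d(\pi\circ f)_x$ an isomorphism $T_xM \to T_{\pi f(x)}(\pi\circ f(M))$.

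Next, with these identifications in place, I would verify that the defining formulas for $\gamma^k_f$ and $\psi^k$ coincide. Both are constructed by iterating the $(1,1)$--bundle maps produced from the infinitesimal period relation \eqref{E:IPR} and from its osculating analog \eqref{E:osc}; under the hypothesis $\cF^\sb_f = \cT^{n-\sb}_{\pi\circ f}$ these two inductive procedures match term by term. Equivalently, the fundamental form admits the Lie--derivative expression
\[
  \psi^k(\xi_1,\ldots,\xi_k)(u_0) \ = \ X_1\cdots X_k(u)\big|_{\pi f(x)}
  \ \hbox{mod}\ \cT^{k-1}_{\pi\circ f,\pi f(x)},
\]
where $u$ is a local section of $\cT^0_{\pi\circ f(M)}$ with $u(\pi f(x))=u_0$ and the $X_i$ are local holomorphic vector fields extending $\xi_i$. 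Via the identifications $\cT^0 = \cF^n_f$ and $\cT^{k-1}_{\pi\circ f,\pi f(x)} = \cF^{n-k+1}_{f,x}$ this is literally the defining formula for $\gamma^k_f(\xi_1,\ldots,\xi_k)(u_0)$. Taking images of the dual maps then yields the claimed equality of characteristic and fundamental forms.

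I expect the one step that requires any care to be the justification of the Lie--derivative description of $\psi^k$ from the curve--based definition of the osculating filtration in \S\ref{S:osc}. This equivalence can be established by writing each $X_i$ as the velocity of a holomorphic curve $\a_i : \Delta \to \pi\circ f(M)$ and expanding $X_1\cdots X_k(u)|_{\pi f(x)}$ via Leibniz, observing that iterated brackets and all lower-order derivative contributions land in $\cT^{k-1}$. Alternatively one may iterate \eqref{E:osc} in exactly the way the characteristic forms are built from \eqref{E:IPR} in \S\ref{S:C}. Once this equivalence is in place, the remainder of the argument is routine book-keeping with the identification $\cF^\sb_f = \cT^{n-\sb}_{\pi\circ f}$.
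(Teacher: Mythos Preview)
Your proposal is correct and follows the same approach as the paper: the paper's own proof simply declares the lemma ``an immediate consequence of the definitions of the characteristic and fundamental forms,'' and your argument is a fleshed-out version of exactly that definition chase. (Note that despite the lemma's indexing, you are right to compare $\gamma^k_f$ with $\psi^k$, since under $\cT^{n-k}=\cF^k$ one has $\cT^0=\cF^n$ and $\cT^k/\cT^{k-1}=\cF^{n-k}/\cF^{n-k+1}$; this matches Proposition~\ref{P:C=F}.)
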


\begin{proof}
Again this is an immediate consequence of the definitions of the characteristic and fundamental forms (\S\ref{S:C} and \S\ref{S:F}, respectively).
\end{proof}

\begin{lemma} \label{L:Fisom}
Let $f : M \inj \check D_\Omega$ be a CY map.  Suppose that the characteristic forms $\mathbf{C}^\sb_f$ of $f$ are isomorphic to the characteristic forms $\mathbf{C}^\sb_\Omega$ of $\tau : \check\Omega \inj \check D_\Omega$.  Then the fundamental forms $\mathbf{F}^\sb_{\pi\circ f}$ and $\mathbf{F}^\sb_\s$ are isomorphic.
\end{lemma}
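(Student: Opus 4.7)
The plan is to upgrade the inclusion $\cT^{n-k}_{\pi\circ f,x} \subset \cF^{k}_{f,x}$ of the preceding lemma to an equality at every $x \in M$, and then appeal to Lemma \ref{L:C=F} together with Proposition \ref{P:C=F}. Once the Hodge and osculating filtrations along $\pi\circ f$ coincide (as they do along $\sigma$ by Corollary \ref{C:O-F=E}), Lemma \ref{L:C=F} gives $\mathbf{F}^\sb_{\pi\circ f} = \mathbf{C}^\sb_f$, Proposition \ref{P:C=F} gives $\mathbf{F}^\sb_\sigma = \mathbf{C}^\sb_\Omega$, and the hypothesized isomorphism $\mathbf{C}^\sb_f \cong \mathbf{C}^\sb_\Omega$ transports to $\mathbf{F}^\sb_{\pi\circ f} \cong \mathbf{F}^\sb_\sigma$ under the same linear map.

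The key input is surjectivity of $\gamma^k_{f,x}$. For $\tau$, the explicit formula \eqref{SE:gammaO} together with iterated application of \eqref{E:ontoU} shows that $\gamma^k_{\Omega,o}$ is surjective onto $\tHom(U^{n,0},U^{n-k,k})$: since $U^{n,0}$ is one-dimensional, the relation $U^{p-1,q+1} = \fg^{-1,1}_\varphi(U^{p,q})$ forces every element of $U^{n-k,k}$ to be of the form $\xi_1\cdots\xi_k(u_0)$ for some fixed $u_0 \in U^{n,0}$ and $\xi_i \in \fg^{-1,1}_\varphi$. Hence $\tdim_\bC\,\mathbf{C}^k_{\Omega,o} = h^{n-k,k}_\Omega$. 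The hypothesis $\mathbf{C}^\sb_f \cong \mathbf{C}^\sb_\Omega$ then forces $\tdim_\bC\,\mathbf{C}^k_{f,x} = h^{n-k,k}_\Omega = \trank(\cF^{n-k}_{f,x}/\cF^{n-k+1}_{f,x})$, so $\gamma^k_{f,x}$ is itself surjective.

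With surjectivity in hand, we induct on $k$ to prove $\cT^k_{\pi\circ f,x} = \cF^{n-k}_{f,x}$. The base case $k=0$ is immediate, as both sides equal the line $\pi(f(x))=\cF^n_{f,x}$. For the inductive step, suppose $\cT^{k-1}_{\pi\circ f,x} = \cF^{n-k+1}_{f,x}$. The $k$-th derivatives $X_1\cdots X_k(v)|_x$ generating $\cT^k_{\pi\circ f,x}$ above $\cT^{k-1}_{\pi\circ f,x}$ (obtained by polarization of the single-direction derivatives $\alpha^{(k)}(0)$ appearing in \S\ref{S:osc}) are precisely the quantities read off by $\gamma^k_{f,x}$ modulo $\cF^{n-k+1}_{f,x}$. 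By the inductive hypothesis and the surjectivity of $\gamma^k_{f,x}$, their span modulo $\cT^{k-1}_{\pi\circ f,x}$ exhausts $\cF^{n-k}_{f,x}/\cF^{n-k+1}_{f,x}$; combined with $\cT^{k-1}_{\pi\circ f,x} = \cF^{n-k+1}_{f,x}$ and the inclusion from the preceding lemma, this yields $\cT^k_{\pi\circ f,x} = \cF^{n-k}_{f,x}$.

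The only delicate point is keeping the index conventions of Lemma \ref{L:C=F} and Proposition \ref{P:C=F} compatible through the chain of identifications; everything else is forced once the representation-theoretic surjectivity encoded in \eqref{E:ontoU} is recognized. I expect the main (and essentially only) obstacle to be verifying that this surjectivity holds uniformly for every $k$ up to the top Hodge level, which the iterated application of \eqref{E:ontoU} supplies automatically.
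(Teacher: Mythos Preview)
Your argument is correct and follows the same route the paper intends: reduce to Lemma \ref{L:C=F} and Proposition \ref{P:C=F} (equivalently Corollary \ref{C:O-F=E}). The paper's own proof is a single sentence (``The lemma is a corollary of Corollary \ref{C:O-F=E} and Lemma \ref{L:C=F}''), so you have supplied the detail the paper leaves implicit---namely, verifying the hypothesis $\cT^{k}_{\pi\circ f,x}=\cF^{n-k}_{f,x}$ of Lemma \ref{L:C=F} by using the surjectivity of $\gamma^k_{\Omega,o}$ (from \eqref{E:ontoU}) and transporting it to $\gamma^k_{f,x}$ via the dimension count forced by the isomorphism $\mathbf{C}^\sb_f\cong\mathbf{C}^\sb_\Omega$.
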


\begin{proof}
The lemma is a corollary of Corollary \ref{C:O-F=E} and Lemma \ref{L:C=F}.
\end{proof}

\begin{proof}[Proof of Theorem \ref{T:main1}]
First observe that we may reduce to the case that $\check \Omega$ is irreducible: for if $\check \Omega$ factors as $\check \Omega_1 \times \check \Omega_2$, then we have corresponding factorizations $\check D_\Omega = \check D_{\Omega_1} \times \check D_{\Omega_2}$ and $f = f_1 \times f_2$ with $f_i : M \to \check D_{\Omega_i}$; the theorem holds for $f$ if and only if it holds for the $f_i$.

Now suppose that $\check\Omega$ is a projective space.  Then $\check \Omega = \bP^1$.  In this case $\check \Omega = \check D_{\Omega}$, and the theorem is trivial.  Likewise if $\check\Omega$ is a quadric hypersurface, then $\check\Omega = \check D_\Omega$, and the theorem is trivial.  (In both these cases $\tau = \sigma$ and $\pi$ is the identity.)

The remainder of the theorem is essentially a corollary of Theorem \ref{T:HY} and Lemma \ref{L:Fisom}.  These results imply that there exists $g \in \tAut(U_\bC)$ so that $g \circ \pi \circ f(M)$ is an open subset of $\pi\circ\tau(\check\Omega) = \s(\check\Omega)$.  From Remark \ref{R:recover} we deduce that $g \circ f (M)$ is an open subset of $\tau(\check\Omega)$.
\end{proof}

\section{Main Theorem \ref{T:mt2}}

In this section we give a precise statement (Theorem \ref{T:main2}) and proof of Main Theorem \ref{T:mt2}.  The theorem assumes a stronger form of isomorphism between the characteristic forms of $\tau$ and $f$ than Main Theorem \ref{T:mt1}; specifically the identification $\mathbf{F}_\Omega \simeq \mathbf{F}_f$ will respect the polarization $Q$ in a way that is made precise by working on a natural frame bundle $\cE_Q \to \check D_\Omega$.

\subsection{The frame bundle $\cE_Q \to \check D_\Omega$}

Let $d+1 = \tdim\,U_\bC$, and let 
\[
  d^p + 1 \ := \ \dim\,F^p
\]
be the dimensions of the flags $(F^p)$ parameterized by $\check D_\Omega$.  Let $\cE_Q$ be the set of all bases $\mathbf{e} = \{ e_0 , \ldots , e_d\}$ of $U_\bC$ so that $Q(e_j,e_k) = \d^{d}_{j+k}$.  Note that we have bundle map
\[
  \begin{tikzcd}
    \cE_Q \arrow[d , -> , "\check\pi"] 
    \arrow[dd , ->, "\pi_\cQ"' , bend right] & \\
    \check D_\Omega \arrow[d , ->,"\pi"] & \\
    \cQ \arrow[r,phantom,":=",description] & 
    \{ [v] \in \bP U_\bC \ | \ Q(v,v) = 0 \}
  \end{tikzcd}
\]
given by
\begin{eqnarray*}
 \check\pi(\mathbf{e}) & = & (F^p) \,,
 \quad F^p = \tspan\{ e_0 , \ldots , e_{d^p} \} \,, \\
 \pi_\cQ(\mathbf{e}) & = & [e_0] \,.
\end{eqnarray*}

\subsection{Maurer--Cartan form}

The frame bundle $\cE_Q$ is naturally identified with the Lie group $\tAut(U_\bC,Q)$, 
\begin{equation}\label{E:Eisom}
  \cE_Q \ \simeq \ \tAut(U_\bC,Q) \,,
\end{equation}
and the bundle maps are equivariant with respect to the natural (left) action of $\tAut(U_\bC,Q)$.  Consequently, the (left-invariant) \emph{Maurer--Cartan form} on $\tAut(U_\bC,Q)$ defines a $\tAut(U_\bC,Q)$--invariant coframing $\theta = (\theta^k_j) \in \Omega^1(\cE_Q,\tEnd(U_\bC,Q))$.  Letting $e_j$ denote the natural map $\cE_Q \to U_\bC$, the coframing is determined by 
\begin{equation}\label{E:de}
  \td e_j \ = \ \theta_j^k \,e_k \,.
\end{equation}
(The `Einstein summation convention' is in effect throughout: if an index appears as both a subscript and a superscript, then it is summed over.  For example, the right-hand side of \eqref{E:de} should be read as $\sum_k \theta_j^k\,e_k$.)  The form $\theta$ can be used to characterize horizontal maps as follows: let $f : M \to \check D_\Omega$ be any holomorphic map and define 
\[
  \cE_f \ := \ f^*(\cE_Q)\,.
\] 
In a mild abuse of notation, we let $\theta$ denote both the Maurer-Cartan form on $\cE_Q$, and its pull-back to $\cE_f$.  Then it follows from the definition \eqref{E:IPR} that 
\begin{equation}\label{E:horiz1}
  \begin{array}{l}
  \hbox{the map $f$ is horizontal if and only if 
  $\left.\theta^\m_\n\right|_{\cE_f} = 0$ for all}\\
  \hbox{$d^{q+1}+1 \le \mu \le d^q$ and $d^{p+1}+1 \le \nu \le d^p$
  with $p-q  \ge 2$.}
  \end{array}
\end{equation}

\subsection{Precise statement of Main Theorem \ref{T:mt2}}

The precise statement (Theorem \ref{T:main2}) of Main Theorem \ref{T:mt2} is in terms of a decomposition of the Lie algebra $\tEnd(U_\bC,Q)$.  Recall the Hodge decomposition \eqref{SE:Uhd}, and define
\[
  E_\ell \ := \ \{ \xi \in \tEnd(U_\bC,Q) \ | \ \xi(U^{p,q}) \subset 
  U^{p+\ell,q-\ell} \} \,.
\]
Then 
\begin{equation}\label{E:Eell}
  \tEnd(U_\bC,Q) \ = \ \bigoplus_\ell E_\ell \,,
\end{equation}
and this direct sum is a graded decomposition in the sense that the Lie bracket satisfies
\begin{equation}\label{E:br2}
  [ E_k, E_\ell ] \ \subset \ E_{k+\ell} \,.
\end{equation}
Let $\theta_\ell \in \Omega^1(\cE_Q,E_\ell)$ denote the component of $\theta$ taking value in $E_\ell$.  It follows from \eqref{E:horiz1} that 
\begin{equation}\label{E:horiz2}
  \begin{array}{c}
  \hbox{a holomorphic map $f : M \to \check D_\Omega$ is horizontal}\\
  \hbox{if and only if 
  $\left.\theta_{-\ell}\right|_{\cE_f} = 0$ for all $\ell \ge 2$.}
  \end{array}
\end{equation}

Let $\tilde P \subset \tAut(U_\bC,Q)$ be the stabilizer of $\varphi = \tau (o) \in \check D$.  Notice that the fibre $\check\pi^{-1}(\varphi) \subset \cE_Q$ is isomorphic to $\tilde P$, and $\check \pi : \cE_Q \to \check D_\Omega$ is a principle $\tilde P$--bundle.  The Lie algebra of $\tilde P$ is 
\[
  E_{\ge0} \ := \ \bigoplus_{\ell\ge0 } E_\ell \,.
\]
Consequently, if $\theta = \theta_{\ge0} + \theta_-$ is the decomposition of $\theta$ into the components taking value in $E_{\ge0}$ and $E_- : = \op_{\ell>0}\,E_{-\ell}$, respectively, then
\begin{equation}\label{E:ker}
  \tker\,\check\pi_* \ = \ \tker\,\theta_{\ge0} \ \subset T\cE_Q \,.
\end{equation}

We may further refine the decomposition \eqref{E:Eell} by taking the representation \eqref{E:G-Aut(U,Q)} into account.  The latter allows us to view $\tEnd(U_\bC,Q)$ as a $G_\bC$--module via the adjoint action of $\tAut(U_\bC,Q)$ on the endomorphism algebra.  Likewise, we may regard $\fg_\bC$ as a subalgebra of $\tEnd(U_\bC,Q)$ via the induced representation $\fg \inj \tEnd(U,Q)$.  Since $\fg_\bC \subset \tEnd(U_\bC,Q)$ is a $G_\bC$--submodule and $G_\bC$ is reductive, there exists a $G_\bC$--module decomposition 
\[
  \tEnd(U_\bC,Q) \ = \ \fg_\bC \ \op \ \fg_\bC^\perp \,.
\]
Note that 
\begin{equation}\label{E:br1}
  [\fg_\bC , \fg_\bC ] \ \subset \ \fg_\bC \tand 
  [\fg_\bC , \fg_\bC^\perp] \ \subset \ \fg_\bC^\perp \,.
\end{equation}
where the Lie bracket is taken in $\tEnd(U_\bC,Q)$.  

Both $\fg_\bC$ and $\fg_\bC^\perp$ inherit graded decompositions
\begin{equation} \label{E:grd}
  \fg_\bC \ = \ \op\,\fg_\ell \tand \fg_\bC^\perp \ = \ \op\,\fg_\ell^\perp
\end{equation}
defined by $\fg_\ell := \fg_\bC \cap E_\ell$ and $\fg_\ell^\perp := \fg_\bC^\perp \cap E_\ell$.  From \eqref{E:br2} and \eqref{E:br1} we deduce
\begin{equation} \label{E:br3}
  [ \fg_k , \fg_\ell] \ \subset \ \fg_{k+\ell} \tand 
  [ \fg_k , \fg_\ell^\perp ] \ \subset \ \fg_{k+\ell}^\perp \,.
\end{equation}
Recall the Hodge decomposition \eqref{E:ghd} and note that $\fg_\ell = \fg^{\ell,-\ell}_\varphi$; in particular, $\fg_\ell = \{0\}$ if $|\ell| > 1$, so that 
\begin{equation}\label{E:gell}
  \fg_\bC \ = \ \fg_1 \,\op\,\fg_0 \,\op\, \fg_{-1} 
\end{equation}
and 
\begin{equation}\label{E:gperpell}
  \fg_\ell^\perp \ = \ E_\ell \quad \hbox{for all } \ |\ell| \ge 2 \,.
\end{equation}
Set 
\begin{eqnarray*}
  \fg_{\ge0} \ = \ \bigoplus_{\ell\ge0} \fg_\ell 
  &\hbox{and}& \fg_- \ = \ \bigoplus_{\ell < 0} \fg_\ell\,,\\
  \fg^\perp_{\ge0} \ = \ \bigoplus_{\ell\ge0} \fg^\perp_\ell 
  &\hbox{and}& \fg^\perp_- \ = \ \bigoplus_{\ell < 0} \fg^\perp_\ell \,.
\end{eqnarray*}
Let $\theta_{\fg_{\ge0}}$, $\theta_{\fg_{\ge0}^\perp}$, 
\[
  \w \ := \ \theta_{\fg_-} \tand \eta \ := \ \theta_{\fg_-^\perp}
\]
denote the components of $\theta$ taking value in $\fg_{\ge0}$, $\fg_{\ge0}^\perp$, $\fg_-$ and $\fg_-^\perp$, respectively.

Given any complex submanifold $\cM \subset \cE_Q$, we say that the restriction $\left.\w\right|_\cM$ is \emph{nondegenerate} if the linear map
\[
  \w : T_{\mathbf{e}} \cM \ \to \ \fg_-
\]
is onto for all $\mathbf{e} \in \cM$.  

\begin{example} \label{eg:tau}
Recall the horizontal, equivariant embedding $\tau : \check \Omega \to \check D_\Omega$.  It follows from \eqref{E:ker} and the fact that $\tau : \check \Omega \inj \check D_\Omega$ is $G_\bC$--equivariant that
\[
  \left. \eta \right|_{\cE_\tau} \ = \ 0
\]
and $\left.\w\right|_{\cE_\tau}$ is nondegenerate.
\end{example}

\noindent Our second main theorem asserts that these two properties suffice to characterize $\tau : \check \Omega \to \check D_\Omega$ up to the action of $\tAut(U_\bC,Q)$.

\begin{theorem} \label{T:main2}
Let $f : M \to \check D_\Omega$ be a horizontal map of Calabi--Yau type.  There exists $g \in \tAut(U_\bC,Q)$ so that $g \circ f(M)$ is an open subset of $\tau(\check \Omega)$ if and only if $\eta$ vanishes on $\cE_f$.
\end{theorem}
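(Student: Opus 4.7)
The plan is to follow \'E.~Cartan's method of moving frames, adapting the argument of \cite{MR3004278}.

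For the necessity direction, suppose $g\circ f(M) \subset \tau(\check\Omega)$ as an open subset. Left-translation by $g$ on $\cE_Q\cong \tAut(U_\bC,Q)$ carries $\cE_f$ into an open subset of $\cE_\tau$. Since the Maurer--Cartan form $\theta$ is left-invariant and $\eta|_{\cE_\tau} = 0$ by Example \ref{eg:tau}, it follows immediately that $\eta|_{\cE_f} = 0$.

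For sufficiency, assume $\eta|_{\cE_f} = 0$. The Calabi--Yau hypothesis on $f$, via the isomorphism $\gamma_{f,x}^1$, forces $\tdim M = \tdim\check\Omega$, so $\tdim \cE_f = \tdim \cE_\tau$. The first step is to pick $p_0 \in \cE_f$ and $q_0 \in \cE_\tau$, and set $g := q_0 \cdot p_0^{-1} \in \tAut(U_\bC,Q)$; left-translation by $g$ carries $p_0$ to $q_0$ and, by left-invariance of $\eta$, preserves the vanishing $\eta = 0$. The problem then reduces to showing that two integral submanifolds of the distribution $\ker\eta \subset T\cE_Q$, of the common maximal dimension $\tdim\cE_\tau$ and passing through the same point $q_0$, must coincide in a neighborhood of $q_0$.

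The desired rigidity will be obtained by the standard prolongation computation. Differentiating $\eta = 0$ via the Maurer--Cartan equation $\td\theta + \tfrac12[\theta,\theta] = 0$ yields the identity $[\theta,\theta]^{\fg_-^\perp} \equiv 0$ on any such integral submanifold $\cE'$. Decomposing by the grading \eqref{E:grd} and invoking the bracket rules \eqref{E:br1} and \eqref{E:br3}, this identity breaks into grade-by-grade relations between the $\fg_{\ge0}^\perp$-valued components of $\theta$ and the Calabi--Yau direction $\omega$. The CY nondegeneracy of $\omega$, combined with the representation-theoretic structure of the $G_\bC$-module $\fg_\bC^\perp$, should pin these components down uniquely; iterating through the positive grades will force $\theta|_{\cE'}$ to agree pointwise with $\theta|_{\cE_\tau}$. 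The Cartan--Darboux uniqueness theorem for Lie-algebra-valued 1-forms then gives $\cE' = \cE_\tau$ locally; projecting along $\check\pi$ to $\check D_\Omega$ yields $g\circ f(M)$ as an open subset of $\tau(\check\Omega)$.

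The hard part will be the grade-by-grade prolongation: at each grade one must verify that the Maurer--Cartan identity together with the CY nondegeneracy of $\omega$ uniquely determines the $\fg_\bC^\perp$-valued components of $\theta$ on $\cE'$, matching those of $\theta|_{\cE_\tau}$. This is the step that parallels the moving-frames analysis in \cite{MR3004278}, and it is here that the representation-theoretic description of Gross's canonical CY-VHS (as recalled in the proof of Lemma \ref{L:gross}) plays the essential role.
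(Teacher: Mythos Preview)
Your necessity argument is fine.

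For sufficiency there is a genuine gap.  Your plan rests on showing that two integral submanifolds of $\ker\eta$ through a common point must coincide, but the distribution $\ker\eta$ is \emph{not} involutive: modulo $\eta$, the Maurer--Cartan equation gives
\[
  \td\eta \ \equiv \ -\,[\w,\theta_{\fg^\perp_0}]_{\fg^\perp_{-1}} \,,
\]
and $[\fg_{-1},\fg^\perp_0]\subset\fg^\perp_{-1}$ is nonzero in general.  So there is no Frobenius uniqueness available, and your prolongation sketch cannot ``force $\theta|_{\cE'}$ to agree pointwise with $\theta|_{\cE_\tau}$'': on $\cE_\tau$ the components $\theta_{\fg^\perp_{\ge0}}$ are nonzero and vary freely across the $\tilde P$--fiber, so there is no canonical target against which to match.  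Differentiating $\eta=0$ gives relations on $\theta_{\fg^\perp_0}$, but only up to this fiber freedom; it does not pin $\theta$ down.

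The paper supplies exactly the missing idea.  Rather than compare $\cE_f$ with $\cE_\tau$, one seeks a \emph{subbundle} $\cE_f'\subset\cE_f$ on which the full $\theta_{\fg^\perp}$ (not merely $\eta$) vanishes; the system $\{\theta_{\fg^\perp}=0\}$ \emph{is} Frobenius by \eqref{E:br1}, with leaves the $\tAut(U_\bC,Q)$--translates of the subbundle $\cG\subset\cE_\tau$ of Example~\ref{eg:cG}.  The construction of $\cE_f'$ is an inductive frame reduction: assuming $\theta_{\fg^\perp_{\le\ell}}=0$ on a subbundle $\cE_f^\ell$, differentiation plus Cartan's Lemma give $\theta_{\fg^\perp_{\ell+1}}=\lambda(\w)$ with $\lambda$ valued in $\ker\d^1$ for the Lie algebra cohomology differential on $\fg^\perp\otimes\fg_-^*$; one then uses the right $\exp(E_{\ge\ell+2})$--action along the fiber to normalize $\lambda$ to zero, which is possible exactly when $\lambda\in\tim\,\d^0$.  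The obstruction is thus the graded piece $H^1_{\ell+2}(\fg_-,\fg^\perp)$, and Kostant's theorem gives $H^1_m=0$ for all $m\ge1$ (after reducing to irreducible $\check\Omega$ and disposing of the trivial $\bP^1$ and quadric cases).  This cohomological vanishing, together with the fiber normalization step, is the essential content absent from your outline.
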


\noindent The theorem is proved in \S\ref{S:prf2}.

\subsection{Relationship to characteristic forms} \label{S:rel}

The purpose of this section is to describe the characteristic forms $\mathbf{C}^k_f$ when $\left.\eta\right|_{\cE_f} = 0$.  The precise statement is given by Proposition \ref{P:eta=0}.  It will be convenient to fix the following index ranges
\[
  d^{n-k+1}+1 \ \le \ \mu_k,\nu_k \ \le \ d^{n-k} \ \hbox{ with } k \ge 1 \,.
\]
As we will see below, the indices $1 \le \mu_1 , \nu_1 \le d^{n-1}$ are distinguished, and we will use the notation
\[
  1 \ \le \ a,b \ \le \ d^{n-1}
\]
for this range.  We claim that the equations
\begin{equation}\label{E:a0}
  \eta^a_0 \ = \ 0 \tand \theta^a_0 \ = \ \w^a_0 \,, \quad\hbox{for all } \ 
  1 \le a \le d^{n-1} 
\end{equation}
hold on $\cE_Q$.  (Note that the first implies the second, and visa versa.)  The way to see this is to observe that (i) $(\theta^a_0)_{a=1}^{d^{n-1}}$ is precisely the component of $\theta$ taking value in 
\[
  E_{-1} \,\cap\, \tHom(\cF^n_\varphi,\cF^{n-1}_\varphi) \ \simeq \ 
  \tHom(\cF^n_\varphi,\cF^{n-1}_\varphi/\cF^n_\varphi) \,,
\]
and (ii) the fact that $\tau$ is Calabi--Yau implies that the projection
\[
   T_o\check\Omega \,\simeq \, \fg_- \ \to \ 
  E_{-1} \,\cap\, \tHom(\cF^n_\varphi,\cF^{n-1}_\varphi/\cF^{n-1})
\]
is an isomorphism.  Therefore, 
\begin{equation}\label{E:isom}
  E_{-1} \,\cap\, \tHom(\cF^n_\varphi,\cF^{n-1}_\varphi) \ = \ 
  \fg_{-1} \,\cap\, \tHom(\cF^n_\varphi,\cF^{n-1}_\varphi) 
  \ \simeq \ \fg_{-1}\,.
\end{equation}
There are three important consequences of \eqref{E:isom}.  First, we have   
\[
  \theta^a_0 \ = \ \w^a_0 \,,
\]
which forces 
\[
  \eta^a_0 \ = \ 0 \,,
\]
for all $1 \le a \le d^{n-1}$.  Second, the fact that $\gamma_{f,x}$ is an isomorphism implies that $\left.\w\right|_{\cE_f}$ is nondegenerate.  Third, from $\fg_- = \fg_{-1}$ we conclude that
\[
  (\theta_\fg)^{\mu_k}_{\nu_\ell} \ = \ 0 \quad\hbox{when } \ k-\ell \ge 2 \,.
\]
It follows from \eqref{E:isom} that the remaining components of $\w=\theta_{\fg_-}$ may be expressed as 
\begin{equation}\label{E:r}
  \w^{\mu_k}_{\nu_{k-1}} \ = \ r^{\mu_k}_{\nu_{k-1}a}\,\w^a_0 \,,
\end{equation}
$k \ge 2$, for some holomorphic functions
\[
  r^{\mu_k}_{\nu_{k-1}a} : \cE_Q \ \to \ \bC \,.
\]
It will be convenient to extend the definition of $r^{\mu_k}_{\nu_{k-1}a}$ to $k=1$ by setting $r^a_{0b} := \d^a_b$.

\begin{proposition} \label{P:eta=0}
Let $f : M \to \check D_\Omega$ be a horizontal map of Calabi--Yau type.  Fix $\ell \ge 0$.  The component of $\theta$ taking value in 
\begin{equation}\label{E:rmketa}
  \fg^\perp_{-1} \ \bigcap \ 
  \bigoplus_{k\le\ell} \tHom(\cF^{n-k+1},\cF^{n-k})
\end{equation}
vanishes on $\cE_f$ if and only if the 
\[
  \tilde r^{\mu_{k}}_{a_k\cdots a_2 a_1} \ := \ r^{\mu_{k}}_{\nu_{k-1} a_k} \, 
  r^{\nu_{k-1}}_{\s_{k-2} a_{k-1}} \cdots r^{\tau_2}_{a_2 a_1}
\]
are the coefficients of $\gamma^k_f$ for all $k \le \ell$; that is,
\begin{equation} \label{E:eta=0}
  \gamma^{k}_{f,x}(\xi_k,\ldots,\xi_1) \ = \ 
  \left\{ e_0 \ \mapsto \ 
  \tilde r^{\m_{k}}_{a_k\cdots a_1} \,\w^{a_k}_0(\z_k) \cdots \w^{a_1}_0(\z_1) 
  \, e_{\m_{k}} 
  \quad\hbox{mod}\quad \cF_{f,x}^{n-k+1} \right\} \,,
\end{equation}
where $\z_i \in T_\mathbf{e} \cE'_f$ with $\mathbf{e} = \{ e_0 , \ldots , e_d\} \in \check\pi^{-1}(f(x))$ and $\check\pi_*(\z_i) = f_*(\xi_i)$.  In particular, $\left.\eta\right|_{\cE_f} = 0$ if and only if the characteristic forms are given by \eqref{E:eta=0} for all $k$.
\end{proposition}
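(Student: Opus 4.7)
The plan is to compute $\gamma^k_{f,x}$ explicitly on $\cE_f$ by iterating the structure equation $\td e_j = \theta^\ell_j\,e_\ell$ applied to the highest-weight frame vector $e_0$, then to match the result termwise against \eqref{E:eta=0} by splitting each factor according to $E_{-1} = \fg_{-1} \oplus \fg_{-1}^\perp$.

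Fix $x \in M$, a lift $\mathbf{e} \in \check\pi^{-1}(f(x)) \cap \cE_f$, holomorphic vector fields $X_i$ extending $\xi_i$, and local lifts $Z_i$ to $\cE_f$ with $\check\pi_*(Z_i) = f_*(X_i)$. Since $h^{n,0} = 1$, the map $\mathbf{e} \mapsto e_0$ is a local frame of $\cF^n_f$, so by the definition of $\gamma^k_f$
\[
  \gamma^k_{f,x}(\xi_1,\ldots,\xi_k)(e_0) \ \equiv \ Z_1 \cdots Z_k(e_0) \pmod{\cF^{n-k+1}_{f,x}}\,.
\]
The first step is the iterated computation. Applying $Z_k$ to $e_0$ gives $\theta^\ell_0(Z_k) e_\ell$; horizontality \eqref{E:horiz1} kills this whenever $e_\ell \notin \cF^{n-1}_f$, and quotienting by $\cF^n_f$ leaves only the indices $\ell = a$ with $1 \le a \le d^{n-1}$. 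Inductively, once $Z_{k-j+1}\cdots Z_k(e_0)$ has been represented modulo $\cF^{n-j+1}_{f,x}$ as a scalar multiple of $e_{\nu_j}$, the application of $Z_{k-j}$ splits into (i) a scalar-derivative term, which still lies in $\cF^{n-j+1}_{f,x} \subset \cF^{n-j}_{f,x}$ and so dies in the next quotient, and (ii) a frame-derivative term $(\cdots)\theta^\ell_{\nu_j}(Z_{k-j}) e_\ell$; horizontality forbids $e_\ell$ below $\cF^{n-j}_f$, while the quotient by $\cF^{n-j}_{f,x}$ eliminates the remaining higher-grade contributions, leaving only $\ell = \mu_{j+1}$. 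Iterating yields
\[
  Z_1 \cdots Z_k(e_0) \ \equiv \ \theta^{\mu_k}_{\nu_{k-1}}(Z_1)\,\theta^{\nu_{k-1}}_{\nu_{k-2}}(Z_2) \cdots \theta^{\nu_1}_0(Z_k)\,e_{\mu_k} \pmod{\cF^{n-k+1}_{f,x}}\,.
\]

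The second step is the decomposition. Each factor $\theta^{\mu_j}_{\nu_{j-1}}$ takes values in $E_{-1} = \fg_{-1} \oplus \fg_{-1}^\perp$ and splits as $\w^{\mu_j}_{\nu_{j-1}} + \eta^{\mu_j}_{\nu_{j-1}}$; the bottom factor satisfies $\theta^{\nu_1}_0 = \w^{\nu_1}_0$ by \eqref{E:a0}; and \eqref{E:r} expresses $\w^{\mu_j}_{\nu_{j-1}} = r^{\mu_j}_{\nu_{j-1} a}\,\w^a_0$. The hypothesis that the \eqref{E:rmketa}-component of $\theta$ vanishes on $\cE_f$ is precisely $\eta^{\mu_j}_{\nu_{j-1}}|_{\cE_f} = 0$ for all $j \le \ell$. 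In that case the product collapses, using the extension $r^a_{0b} = \d^a_b$, to exactly $\tilde r^{\mu_k}_{a_k \cdots a_1}\,\w^{a_k}_0(Z_1) \cdots \w^{a_1}_0(Z_k)\,e_{\mu_k}$, which is \eqref{E:eta=0} up to relabeling dummy indices and invoking symmetry of $\gamma^k_f$. For the converse I would induct on $k$: the $k=1$ case is automatic since $\eta^a_0 = 0$; assuming $\eta^{\mu_j}_{\nu_{j-1}}|_{\cE_f} = 0$ for $j<k$ and that \eqref{E:eta=0} holds at level $k$, the unmatched residue is the single term $\eta^{\mu_k}_{\nu_{k-1}}(Z_1) \cdot \prod_{i=2}^k \w^{\nu_{k-i+1}}_{\nu_{k-i}}(Z_i) \cdot e_{\mu_k}$, which must vanish identically; nondegeneracy of $\w|_{\cE_f}$ (\S\ref{S:rel}, a consequence of the Calabi--Yau hypothesis and \eqref{E:isom}) lets the $\w^a_0(Z_i)$ attain arbitrary values, forcing $\eta^{\mu_k}_{\nu_{k-1}}|_{\cE_f} = 0$.

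The main obstacle is careful bookkeeping in the iteration: at each step one must confirm both that scalar-derivative contributions lie in $\cF^{n-j+1}_{f,x}$ (and so drop when we pass to the quotient by $\cF^{n-j}_{f,x}$) and that horizontality annihilates every frame-output outside the graded piece $\cF^{n-j-1}_{f,x}/\cF^{n-j}_{f,x}$. A secondary subtlety in the converse is identifying the residual summand at level $k$, after subtracting off all contributions from levels $j<k$, as coming from the single component $\eta^{\mu_k}_{\nu_{k-1}}$ rather than from any other piece of $\fg_{\ge 0}^\perp$; here the explicit expression $\w^{\mu_j}_{\nu_{j-1}} = r^{\mu_j}_{\nu_{j-1} a}\,\w^a_0$ together with the linear independence furnished by nondegeneracy of $\w|_{\cE_f}$ are what make the identification unambiguous.
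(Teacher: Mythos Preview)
Your approach is correct in outline but takes a genuinely different route from the paper's.  The paper never writes down the iterated product formula
\[
  \gamma^k_{f,x}(\xi_1,\ldots,\xi_k)(e_0) \ \equiv \
  \theta^{\mu_k}_{\nu_{k-1}}(Z_1)\,\theta^{\nu_{k-1}}_{\nu_{k-2}}(Z_2)\cdots\theta^{a}_0(Z_k)\,e_{\mu_k}
  \pmod{\cF^{n-k+1}_{f,x}}\,.
\]
Instead, it differentiates the horizontality constraints $\theta^{\mu_{j+1}}_{\nu_{j-1}}=0$ via the Maurer--Cartan equation and applies Cartan's Lemma to produce functions $q^{\mu_j}_{\nu_{j-1}a}$ with $\theta^{\mu_j}_{\nu_{j-1}} = q^{\mu_j}_{\nu_{j-1}a}\,\w^a_0$; it then identifies the $q$'s as coefficients of $\gamma^j_f$ and checks that $\eta^{\mu_j}_{\nu_{j-1}}=0$ is equivalent to $q=r$.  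Your direct iteration is more transparent and avoids Cartan's Lemma altogether; the paper's computation is more in the moving-frames idiom and yields the symmetry of the coefficients as a byproduct of Cartan's Lemma rather than from well-definedness of $\gamma^k_f$.

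There is one genuine gap in your converse.  After the inductive step you are left with
\[
  \sum_{\nu_{k-1}} \eta^{\mu_k}_{\nu_{k-1}}(Z_1)\,
  \tilde r^{\nu_{k-1}}_{a_{k-1}\cdots a_1}\,
  \w^{a_{k-1}}_0(Z_2)\cdots\w^{a_1}_0(Z_k) \ = \ 0\,.
\]
Nondegeneracy of $\left.\w\right|_{\cE_f}$ lets you strip the $\w^{a_i}_0(Z_j)$ factors, but what remains is only $\sum_{\nu_{k-1}} \eta^{\mu_k}_{\nu_{k-1}}(Z_1)\,\tilde r^{\nu_{k-1}}_{a_{k-1}\cdots a_1}=0$.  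To conclude $\eta^{\mu_k}_{\nu_{k-1}}(Z_1)=0$ you need the matrix $(\tilde r^{\nu_{k-1}}_{a_{k-1}\cdots a_1})$ to have full row rank in $\nu_{k-1}$; equivalently, you need the surjectivity of $\gamma^{k-1}_{\tau,o}:\tSym^{k-1}\fg_{-1}\to\tHom(U^{n,0},U^{n-k+1,k-1})$, which is exactly what \eqref{E:ontoU} (iterated) or Remark~\ref{R:sol} gives.  The paper invokes Remark~\ref{R:sol} explicitly at the analogous point; you should too.  (Your closing remark about ``linear independence furnished by nondegeneracy of $\w$'' does not cover this: nondegeneracy controls the $\w^a_0$, not the rank of the $r$-system.)
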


\noindent Note that the component of $\theta$ taking value in \eqref{E:rmketa} is $(\eta^{\mu_\ell}_{\nu_{\ell-1}})_{\ell\le k}$.  The proposition is proved by induction in \S\S\ref{S:1}--\ref{S:k}; because the first nontrivial step in the induction is $\ell=3$, we work through the cases $\ell=1,2,3$ explicitly.

\begin{remark} \label{R:eta=0}
Suppose that $\mathbf{e} = \{ e_0 , \ldots , e_d \} \in \cE_{\tau,o}$.  Making use of \eqref{E:isom}, we may identify $\{e_1,\ldots,e_{d^{n-1}}\}$ with a basis of $\{\xi_1 , \ldots , \xi_{d^{n-1}}\}$ of $\fg_{-}$.  Then the coefficients $r^{\mu_k}_{\nu_{k-1}a}$ are determined by 
\begin{equation} \label{E:sol}
  \xi_a(e_{\nu_{k-1}}) \ = \ r^{\mu_k}_{\nu_{k-1}a} e_{\mu_k} 
  \quad\hbox{mod}\quad \cF^{n-k+1}_{\tau,o} \,.
\end{equation}
There are two important consequences of this expression:

(a) It follows from \eqref{SE:gammaO} that \eqref{E:eta=0} holds for $f = \tau$.

(b) Equation \eqref{E:isom} tells us that $\fg_{-1}$ is the graph over $E_{-1} \,\cap\,\tHom(\cF^{n}_\varphi , \cF^{n-1}_\varphi/\cF^{n}_\varphi)$ of a linear function
\[
  R : E_{-1} \,\cap\,\tHom(\cF^{n}_\varphi , \cF^{n-1}_\varphi/\cF^{n}_\varphi)
  \ \to \ \bigoplus_{k\ge1} \,  
  \tHom(\cF^{n-k}_\varphi , \cF^{n-k-1}_\varphi/\cF^{n-k}_\varphi) \,.
\]
The functions $r^{\mu_k}_{\nu_{k-1}a}(\mathbf{e})$ of \eqref{E:r} are the coefficients of this linear map with respect to the bases of $E_{-1} \,\cap\,\tHom(\cF^{n}_\varphi , \cF^{n-1}_\varphi/\cF^{n}_\varphi)$ and $\oplus_{k\ge1}\, \tHom(\cF^{n-k}_\varphi , \cF^{n-k-1}_\varphi/\cF^{n-k}_\varphi)$ determined by $\mathbf{e} \in \cE_Q$.  Assuming that \eqref{E:eta=0} holds, this implies that the $k$--th characteristic form of $f$ is isomorphic to that of $\tau$ in the following sense: given $\mathbf{e}_o \in \cE_\tau$ in the fibre over $o$ and $\mathbf{e}_x \in \cE_f$ in the fibre over $x$, there exists a unique $g \in \tAut(U_\bC,Q) \simeq \cE_Q$ so that $\mathbf{e}_x = g \cdot \mathbf{e}_o$.  The group element $g$ defines an explicit isomorphism between $\tSym^k T^*_o\check\Omega \ot \tHom(\cF^n_{\tau,o} , \cF^{n-k}_{\tau,o}/\cF^{n-k+1}_{\tau,o})$ and $\tSym^k T^*_x M \ot \tHom(\cF^n_{f,x} , \cF^{n-k}_{f,x}/\cF^{n-k+1}_{f,x})$ that identifies the $k$--th characteristic forms $\gamma^k_{\tau,o}$ and $\gamma^k_{f,x}$ at $o$ and $x$, respectively.  \emph{This is the precise sense in which the vanishing of $\eta$ on $\cE_f$ is a refined notion of agreement of the characteristic forms.}
\end{remark}

\begin{remark}\label{R:sol}
Recalling \eqref{E:ontoU}, and the identification $U^{p,q} = \cF^p_{\tau,o}/\cF^{p+1}_{\tau,o}$, \eqref{E:sol} implies that the system $\{r^{\mu_k}_{\nu_{k-1}a} Y_{\mu_k} = 0\}$ of $d^{n-1}(d^{k-1}-d^k)$ equations in the $d^{k}-d^{k+1}$ unknowns $\{Y_{\mu_k}\}$ has only the trivial solution $Y_{\mu_k}=0$.
\end{remark}

\subsubsection{The first characteristic form} \label{S:1}

Let $f : M \to \check D_\Omega$ be any horizontal map of Calabi--Yau type.  On the bundle $\cE_f$, \eqref{E:horiz1} and \eqref{E:a0} yield
\[
  \td e_0 \ = \ \theta^0_0 \, e_0 \ + \ \sum_{a = 1}^{d^{n-1}} \w^a_0 \,e_a \,.
\]
Consequently, the first characteristic form $\gamma_{f,x} : T_x M \to \tHom(\cF^n_{f,x} , \cF^{n-1}_{f,x}/\cF^n_{f,x} )$ is given by 
\begin{equation}\label{E:cf1}
  \gamma_{f,x}(\xi) \ = \ 
  \left\{ e_0 \ \mapsto \ \sum_{a = 1}^{d^{n-1}} \w^a_0(\z) \,e_a 
  \quad\hbox{mod} \quad e_0 \right\} \,,
\end{equation}
where $\z \in T_\mathbf{e} \cE_f$ with $\mathbf{e} = \{ e_0 , \ldots , e_d\} \in \check\pi^{-1}(f(x))$ and $\check\pi_*(\z) = f_*(\xi)$.  

This establishes Proposition \ref{P:eta=0} for the trivial case that $\ell=1$.

\subsubsection{The second characteristic form} \label{S:2}

From \eqref{E:horiz1} we see that 
\begin{equation}\label{E:m20}
  \theta^{\m_2}_0 = 0 \quad\hbox{on}\quad \cE_f
\end{equation}
for all $d^{n-1}+1 \le \mu_2 \le d^{n-2}$.  The derivative of this expression is given by the \emph{Maurer--Cartan equation}
\begin{equation} \label{E:mce}
  \td \theta \ = \ -\half [\theta,\theta] \,;\footnote{Given two Lie algebra valued $1$-forms $\phi$ and $\psi$, the Lie algebra valued 2-form $[\phi,\psi]$ is defined by $[\phi,\psi](u,v) := \half([\phi(u),\psi(v)] - [\phi(v),\psi(u)]$.} \quad
  \hbox{equivalently, } \ 
  \td \theta^j_k \ = \ - \theta^j_\ell \wedge \theta^\ell_k \,.
\end{equation}
Differentiating \eqref{E:m20} and applying \eqref{E:horiz1} yields
\[
  0 \ = \ \td\theta^{\m_2}_0 \ = \ -\theta^{\m_2}_a \wedge \w^a_0 
\]
on $\cE_f$.  Cartan's Lemma \cite{MR2003610} asserts that there exist holomorphic functions 
\[
  q^{\m_2}_{ab} \, = \, q^{\m_2}_{ba} : \cE_f \ \to \ \bC
\]
so that 
\begin{equation} \label{E:cf2}
 \theta^{\m_2}_a \ = \ q^{\m_2}_{ab} \,\w^b_0 \,.
\end{equation}
The $q^{\m_2}_{ab}$ are the coefficients of the second characteristic form; specifically,
\begin{equation}\label{E:cf2'}
  \gamma^2_{f,x}(\xi_1,\xi_2) \ = \ 
  \left\{ e_0 \ \mapsto \ 
  q^{\m_2}_{ab} \,\w^a_0(\z_1) \w^b_0(\z_2) \, e_{\m_2} \quad\hbox{mod}\quad 
  \cF_{f,x}^{n-1} \right\} \,,
\end{equation}
where $\z_i \in T_\mathbf{e} \cE'_f$ with $\mathbf{e} = \{ e_0 , \ldots , e_d\} \in \check\pi^{-1}(f(x))$ and $\check\pi_*(\z_i) = f_*(\xi_i)$.  

\begin{remark}
From Example \ref{eg:tau}, \eqref{E:r} and \eqref{E:cf2} we see that $q^{\mu_2}_{ab} = r^{\mu_2}_{ab}$ on $\cE_\tau$.  
\end{remark}

Returning to the bundle $\cE_f$, notice that $(\eta^{\mu_2}_a)$ is precisely the component of $\theta$ taking value in  
\[
  \fg^\perp_{-1} \,\cap\, \tHom(\cF^{n-1}_\varphi,\cF^{n-2}_\varphi) \,.
\]
Comparing \eqref{E:r} and \eqref{E:cf2}, we see that this component vanishes if and only if $r^{\mu_2}_{ab} = q^{\mu_2}_{ab}$ on $\cE_f$.  Noting that $\tilde r^{\mu_2}_{ab} = r^{\mu_2}_{ab}$, this yields Proposition \ref{P:eta=0} for $\ell=2$.

\subsubsection{The third characteristic form} \label{S:3}

From \eqref{E:horiz1} we see that 
\begin{equation}\label{E:m30}
  \theta^{\m_3}_a = 0 \quad\hbox{on}\quad \cE_f
\end{equation}
for all $d^{n-2}+1 \le \mu_2 \le d^{n-3}$.  Applying \eqref{E:horiz1}, the Maurer-Cartan equation \eqref{E:mce}, and substituting \eqref{E:cf2}, we compute
\[
  0 \ = \ -\td \theta^{\mu_3}_a 
  \ = \ \theta^{\mu_3}_{\nu_2} \wedge \theta^{\nu_2}_a
  \ = \ \theta^{\mu_3}_{\nu_2} \wedge q^{\nu_2}_{ab}\,\w^b_0 \,.
\]
Again Cartan's Lemma implies there exist holomorphic functions $q^{\nu_3}_{abc} : \cE_f \to \bC$, fully symmetric in the subscripts $a,b,c$, so that 
\begin{equation}\label{E:cf3}
  q^{\nu_2}_{ab}\, \theta^{\mu_3}_{\nu_2} \ = \ q^{\mu_3}_{abc} \,\w^c_0 \,.
\end{equation}
These functions are the coefficients of the third characteristic form of $f$ in the sense that 
\begin{equation}\label{E:cf3'}
  \gamma^3_{f,x}(\xi_1,\xi_2,\xi_3) \ = \ 
  \left\{ e_0 \ \mapsto \ 
  q^{\m_3}_{abc} \,\w^a_0(\z_1) \w^b_0(\z_2) \w^c_0(\z_3) \, e_{\m_3} 
  \quad\hbox{mod}\quad \cF_{f,x}^{n-2} \right\} \,,
\end{equation}
where $\z_i \in T_\mathbf{e} \cE'_f$ with $\mathbf{e} = \{ e_0 , \ldots , e_d\} \in \check\pi^{-1}(f(x))$ and $\check\pi_*(\z_i) = f_*(\xi_i)$.  

To prove Proposition \ref{P:eta=0} for $\ell=3$, note that \S\ref{S:2} yields $q^{\mu_2}_{ab} = r^{\mu_2}_{ab}$.  Then we can solve \eqref{E:cf3} for $\theta^{\mu_3}_{\nu_2}$ (Remark \ref{R:sol}).  In particular, there exist $q^{\mu_3}_{\nu_2a}$ so that $\theta^{\mu_3}_{\nu_2} = q^{\mu_3}_{\nu_2 a} \,\w^a_0$.  The component of $\theta$ taking value in
\[
  \fg^\perp_{-1} \,\cap\, \tHom(\cF^{n-2}_\varphi,\cF^{n-3}_\varphi)
\]
vanishes (equivalently, $\eta^{\mu_3}_{\nu_2}=0$) if and only if these $q^{\mu_3}_{\nu_2a}$ are the $r^{\mu_3}_{\nu_2a}$ of \eqref{E:r}; equivalently, \eqref{E:eta=0} holds for $k=3$.  This is Proposition \ref{P:eta=0} for $\ell=3$.

\subsubsection{And so on} \label{S:k}

Assume that Proposition \ref{P:eta=0} holds for a fixed $\ell \ge 3$.  Then we have $\theta^{\mu_k}_{\nu_{k-1}} = \w^{\mu_k}_{\nu_{k-1}} = r^{\mu_k}_{\nu_{k-1}a} \w^a_0$ for all $k \le \ell$.   As in \S\S\ref{S:2}--\ref{S:3} we obtain the coefficients of the $(\ell+1)$--st characteristic form by differentiating $\theta^{\mu_{\ell+1}}_{\nu_{\ell-1}}=0$  and invoking Cartan's Lemma to obtain
\[
  r^{\s_\ell}_{\nu_{\ell-1}a} \, \theta^{\mu_{\ell+1}}_{\s_\ell} \ = \ 
  q^{\mu_{\ell+1}}_{\nu_{\ell-1}ab} \, \w^b_0 \,,
\]
for some holomorphic functions $q^{\mu_{\ell+1}}_{\nu_{\ell-1}ab} : \cE_f \to \bC$, symmetric in $a,b$.  Then Remark \ref{R:sol} implies that there exist $q^{\mu_{\ell+1}}_{\nu_\ell a} : \cE_f \to \bC$ so that 
\[
  \theta^{\mu_{\ell+1}}_{\nu_\ell} \ = \ 
  q^{\mu_{\ell+1}}_{\nu_\ell a} \, \w^a_0 \,.
\]
The $q^{\mu_{\ell+1}}_{a_\ell\cdots a_1 a_0} := q^{\mu_{\ell+1}}_{\nu_\ell a_\ell} \, r^{\nu_\ell}_{\s_{\ell-1} a_{\ell-1}} \cdots r^{\tau_2}_{a_1 a_0}$ are the coefficients of the $(\ell+1)$--st characteristic form of $f$ in the sense that 
\begin{equation} \label{E:cf'}
  \gamma^{\ell+1}_{f,x}(\xi_\ell,\ldots,\xi_0) \ = \ 
  \left\{ e_0 \ \mapsto \ q^{\m_{\ell+1}}_{a_\ell\cdots a_0} \,
  \w^{a_\ell}_0(\z_k) \cdots \w^{a_0}_0(\z_0) \, e_{\m_{\ell+1}} 
  \quad\hbox{mod}\quad \cF_{f,x}^{n-\ell} \right\} \,,
\end{equation}
where $\z_i \in T_\mathbf{e} \cE'_f$ with $\mathbf{e} = \{ e_0 , \ldots , e_d\} \in \check\pi^{-1}(f(x))$ and $\check\pi_*(\z_i) = f_*(\xi_i)$.  The component of $\theta$ taking value in
\[
  \fg^\perp_{-1} \,\cap\, \tHom(\cF^{n-\ell}_\varphi,\cF^{n-\ell-1}_\varphi)
\]
vanishes (equivalently, $\eta^{\mu_{\ell+1}}_{\nu_\ell}=0$), if and only if the $q^{\mu_{\ell+1}}_{\nu_\ell a}$ are the $r^{\mu_{\ell+1}}_{\nu_\ell a}$ of \eqref{E:r}; equivalently, \eqref{E:eta=0} holds for $k \le \ell+1$.

This establishes Proposition \ref{P:eta=0}.

\subsection{Proof of Theorem \ref{T:main2}} \label{S:prf2}

\begin{claim} \label{cl:cE'}
It suffices to show that $\cE_f$ admits a sub-bundle $\cE_f^{\prime}$ on which $\theta_{\fg^\perp}$ vanishes.
\end{claim}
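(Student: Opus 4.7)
The ``only if'' direction of Theorem~\ref{T:main2} is already Example~\ref{eg:tau}, so what the claim actually reduces matters to is the ``if'' direction: given $\cE_f'$ on which $\theta_{\fg^\perp}=0$, produce $g\in\tAut(U_\bC,Q)$ with $g\circ f(M)\subset\tau(\check\Omega)$ open. My plan is a Maurer--Cartan/Frobenius argument on $\cE_Q\simeq\tAut(U_\bC,Q)$, followed by a dimension count provided by the Calabi--Yau hypothesis.

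First I would verify that the holomorphic distribution $\cD:=\ker\theta_{\fg^\perp}\subset T\cE_Q$ is Frobenius--integrable. Restricting the Maurer--Cartan equation $\td\theta=-\tfrac12[\theta,\theta]$ to $\cD\wedge\cD$ and invoking $[\fg_\bC,\fg_\bC]\subset\fg_\bC$ from \eqref{E:br1}, the bracket $[\theta,\theta]$ takes values in $\fg_\bC$ along $\cD$; hence $\td\theta_{\fg^\perp}|_{\cD}=0$. Under~\eqref{E:Eisom}, the left-invariance of $\theta$ together with the fact that $\fg_\bC$ is the Lie algebra of the connected subgroup $G_\bC\subset\tAut(U_\bC,Q)$ then identify the maximal integral leaves of $\cD$ with the left cosets $g\cdot G_\bC\subset\cE_Q$.

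Second I would push the picture down to $\check D_\Omega$. The hypothesis $\theta_{\fg^\perp}|_{\cE_f'}=0$ says $\cE_f'$ is integral for $\cD$, so each connected component lies inside a single coset $g_0\cdot G_\bC$. Using that $\cE_f'\to M$ surjects, that $\check\pi:\cE_Q\to\check D_\Omega$ is equivariant for the left $\tAut(U_\bC,Q)$-action, and that $\check\pi(G_\bC)$ is the $G_\bC$-orbit of $\varphi=\tau(o)$, we get
\[
  f(M)\ =\ \check\pi(\cE_f')\ \subset\ g_0\cdot\check\pi(G_\bC)\ =\ g_0\cdot\tau(\check\Omega).
\]
Setting $g:=g_0^{-1}\in\tAut(U_\bC,Q)$ gives $g\circ f(M)\subset\tau(\check\Omega)$. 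To upgrade ``contained in'' to ``open subset of'', I would use the CY hypothesis: $\gamma_{f,x}$ is a linear isomorphism at every $x$, and by~\eqref{E:isom} $\tdim M=\tdim\fg_{-1}=\tdim\check\Omega$, so $g\circ f$ is a local biholomorphism onto its image.

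Both the integrability check and the dimension count are routine; the real obstacle lies not in this claim but in producing $\cE_f'$ in the first place. That is where $\eta|_{\cE_f}=0$ must finally be exploited: by differentiating the structure equations~\eqref{E:mce} and applying Cartan's Lemma successively (in the spirit of~\cite{MR3004278} and~\cite{MR2030098}) one absorbs each remaining component of $\theta_{\fg^\perp_{\ge0}}$, reducing the $\tilde P$--structure on $\cE_f$ to a $P$--structure, and the resulting reduction is the desired $\cE_f'$.
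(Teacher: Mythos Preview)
Your argument is correct and is essentially the paper's own proof: both deduce Frobenius integrability of $\{\theta_{\fg^\perp}=0\}$ from the Maurer--Cartan equation together with \eqref{E:br1}, identify the maximal leaves with the left $\tAut(U_\bC,Q)$--translates of $G_\bC$ (the paper phrases this via the subbundle $\cG=G\cdot\mathbf{e}_o$ of Example~\ref{eg:cG}), and then use equivariance of $\check\pi$ to conclude $g\circ f(M)\subset\tau(\check\Omega)$. Your explicit dimension count from the CY hypothesis to get openness is a detail the paper leaves implicit, and your final paragraph correctly anticipates the inductive reduction that follows.
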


\begin{example}[Subbundle $\cG \subset \cE_\tau$] \label{eg:cG}
The bundle $\cE_\tau \to \check\Omega$ admits a subbundle $\cG$ that is isomorphic to the image of $G_\bC$ in $\tAut(U_\bC,Q)$, and on which the entire component $\theta_{\fg^\perp}$ of $\theta$ taking value in $\fg^\perp$ vanishes.  To see this, fix a basis $\mathbf{e}_o = \{ e_0 , \ldots , e_d\}$ that is adapted to the Hodge decomposition \eqref{SE:Uhd} in the sense that $e_0$ spans $U^{n,0}$, $\{ e_1 , \ldots , e_{d_1}\}$ spans $U^{n-1,1}$, et cetera, so that $\{ e_{d_{q-1}+1} , \ldots , e_{d_q} \}$ spans $U^{n-q,q}$, for all $q$.  Then $\mathbf{e}_o \in\cE_\tau$, and  
\[
\begin{tikzcd}
  \cG \arrow[r,phantom,":=",description] \arrow[d , ->>] 
  & G \cdot \mathbf{e}_o \ \subset \ \cE_\tau \\
  \tau(\check\Omega)
\end{tikzcd}
\]
is a $G_\bC$--homogenous subbundle with the properties that 
\begin{equation}\label{E:gperpcG}
  \left.\theta_{\fg^\perp}\right|_\cG \ = \ 0 \,,
\end{equation}
(in particular, $\left.\eta\right|_\cG = 0$)
and $\left.\theta_{\fg}\right|_\cG$ is a coframing of $\cG$ (so that $\left.\w\right|_{\cG}$ is nondegenerate).  
\end{example}

\begin{proof}
Recalling \eqref{E:br1}, the Maurer--Cartan equation $\td \theta = -\half[\theta,\theta]$ implies that $\{ \theta_{\fg^\perp} = 0\}$ is a Frobenius system on $\cE_Q$.  Notice that the bundle $\cG \subset \cE_Q$ of Example \ref{eg:cG} is the maximal integral through $\mathbf{e}_o$.  Since $\theta$ is $\tAut(U_\bC,Q)$--invariant, it follows that the maximal integral manifolds of the Frobenius system are the $g\cdot \cG$, with $g \in \tAut(U_\bC,Q)$.  Therefore, $g \cdot \cE_f^{\prime} \subset \cG$ for some $g \in \tAut(U_\bC,Q)$. From the $\tAut(U_\bC,Q)$--equivariance of $\check\pi$ we conclude that $g \circ f(M) \subset \check \Omega$.
\end{proof}

We will show that $\cE_f$ admits a sub-bundle $\cE_f'$ on which $\theta_{\fg^\perp}$ vanishes by induction.  Given $\ell \ge -1$, suppose that $\cE_f$ admits a subbundle $\cE^\ell_f$ on which the form $\theta_{\fg^\perp_k}$ vanishes for all $k \le \ell$.  This inductive hypothesis holds for $\ell = -1$ with $\cE_f = \cE_f^{-1}$.  

\begin{claim} \label{cl:max}
A maximal such $\cE_f^\ell$ will have the property that the linear map
\[
  \theta_{\ge\ell+2} : \tker\,\w \,\subset\, T_\mathbf{e} \cE_f^\ell 
  \ \to \ E_{\ge\ell+2}
\]
is onto for all $\mathbf{e} \in \cE_f^\ell$.
\end{claim}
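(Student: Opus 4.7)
The plan is to pin down $T_\mathbf{e}\cE_f^\ell$ as a subspace of $T_\mathbf{e}\cE_f$ under the maximality hypothesis, and then verify the surjectivity of $\theta_{\ge\ell+2}$ on $\tker\,\w\cap T_\mathbf{e}\cE_f^\ell$ by an algebraic projection.

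First, on the full frame bundle I would verify that $\theta$ (a coframing on $\cE_Q$) identifies $T_\mathbf{e}\cE_f$ with the subspace $\fg_{-1}\oplus E_{\ge 0}\subset\tEnd(U_\bC,Q)$.  The vertical directions of $\check\pi:\cE_f\to f(M)$ account for the summand $E_{\ge 0}$ via \eqref{E:Eisom}.  For the horizontal complement, the Calabi--Yau hypothesis combined with \eqref{E:isom} identifies $f_*(T_{\check\pi(\mathbf{e})}M)$ with $\fg_{-1}$, while the hypothesis $\left.\eta\right|_{\cE_f}=0$ of Theorem \ref{T:main2}, together with the horizontality characterization \eqref{E:horiz2}, excludes any $\fg^\perp_k$-contribution for $k<0$.

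Maximality of $\cE_f^\ell$ then forces
\[
  \theta(T_\mathbf{e}\cE_f^\ell) \ = \
  \fg_{-1}\,\oplus\,\fg_{\ge 0}\,\oplus\,\fg^\perp_{\ge\ell+1}\,.
\]
The inclusion $\subset$ is the hypothesis $\theta_{\fg^\perp_k}|_{\cE_f^\ell}=0$ for $k\le\ell$, which kills the $\fg^\perp_k$-components with $0\le k\le\ell$ (those with $k<0$ being already absent on $\cE_f$).  For the reverse inclusion one observes that the Pfaffian system $\{\theta_{\fg^\perp_k}=0\}_{0\le k\le\ell}$ cuts out, on each $\tilde P$-fibre, a distribution of rank $\dim\fg_{\ge 0}+\dim\fg^\perp_{\ge\ell+1}$ which is Frobenius integrable (the subspace $\fg_{\ge 0}\oplus\fg^\perp_{\ge\ell+1}\subset E_{\ge 0}$ is closed under bracket by the graded relations \eqref{E:br2}--\eqref{E:br3}); a maximal subbundle realizes the corresponding integral leaves fibrewise, delivering the reverse inclusion.

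With this identification, $\w = \theta_{\fg_-}=\theta_{\fg_{-1}}$, and the injectivity of $\theta$ on $T_\mathbf{e}\cE_f^\ell$ give
\[
  \tker\,\w \,\cap\, T_\mathbf{e}\cE_f^\ell \ \simeq \
  \fg_{\ge 0}\,\oplus\,\fg^\perp_{\ge\ell+1}\,.
\]
The projection $\theta_{\ge\ell+2}$ then lands in $E_{\ge\ell+2}=\fg_{\ge\ell+2}\oplus\fg^\perp_{\ge\ell+2}$.  By \eqref{E:gell}, $\fg_{\ge\ell+2}$ equals $\fg_1$ when $\ell=-1$ and vanishes when $\ell\ge 0$; in the first case it is hit by the $\fg_1\subset\fg_{\ge 0}$ summand, in the second there is nothing to hit.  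The component $\fg^\perp_{\ge\ell+1}\sur\fg^\perp_{\ge\ell+2}$ is the obvious surjection.  Thus $\theta_{\ge\ell+2}$ sends $\tker\,\w\cap T_\mathbf{e}\cE_f^\ell$ onto $E_{\ge\ell+2}$.

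The main obstacle is justifying the equality $\theta(T_\mathbf{e}\cE_f^\ell) = \fg_{-1}\oplus\fg_{\ge 0}\oplus\fg^\perp_{\ge\ell+1}$ rigorously from the word ``maximal.''  A dimension count alone is not enough; one must verify integrability of the Pfaffian system fibrewise via the bracket relations \eqref{E:br2}--\eqref{E:br3}, and then argue that a maximal subbundle $\cE_f^\ell\to M$ meets each fibre in an integral leaf of the full rank.  The algebraic projections in the remaining steps are then routine.
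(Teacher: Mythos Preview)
Your approach has a genuine gap, and in fact the key equality you aim for,
\[
  \theta(T_\mathbf{e}\cE_f^\ell) \ = \ \fg_{-1}\,\oplus\,\fg_{\ge 0}\,\oplus\,\fg^\perp_{\ge\ell+1}\,,
\]
is not merely unjustified but generally \emph{false} for $\ell\ge0$.  The fibrewise Frobenius argument is fine as far as it goes: $\fg_{\ge 0}\oplus\fg^\perp_{\ge\ell+1}$ is a subalgebra of $E_{\ge 0}$, so each $\tilde P$--fibre is foliated by leaves of that dimension.  The error is in the last step, where you assert that a maximal sub\emph{bundle} has fibres equal to full leaves.  The obstruction is the horizontal direction.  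If the fibre over $x$ contained the $\fg^\perp_{\ell+1}$--direction, then for $\z\in\fg^\perp_{\ell+1}$ the right translate $R_{\exp\z}$ would carry $\cE_f^\ell$ into itself; but $R_{\exp\z}^*\theta=\tAd_{\exp(-\z)}\theta$, and on a horizontal lift $\tilde\xi$ with $\theta(\tilde\xi)\in\fg_{-1}$ one finds
\[
  (\tAd_{\exp(-\z)}\theta(\tilde\xi))_{\fg^\perp_\ell} \ = \ -[\z,\w(\tilde\xi)] \ + \ \cdots \ \in \ \fg^\perp_\ell
\]
by \eqref{E:br3}.  This is nonzero whenever $\z\ne0$ (indeed \eqref{E:lam2} of the paper shows $\fg^\perp_{\ell+1}\to\fg^\perp_\ell\otimes\fg_-^*$ is injective), so the defining condition $\theta_{\fg^\perp_\ell}=0$ fails on the translate.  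Equivalently, the Pfaffian system $\{\theta_{\fg^\perp_k}=0\}_{k\le\ell}$ on $\cE_f$ (not just fibrewise) is \emph{not} Frobenius: the term $[\w,\theta_{\fg^\perp_{\ell+1}}]$ in $\td\theta_{\fg^\perp_\ell}$ does not lie in the differential ideal.

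The paper avoids this by aiming lower.  It only enlarges $\cE_f^\ell$ by $\tilde P_{\ell+2}:=\exp(E_{\ge\ell+2})$, not by the full $\exp(\fg_{\ge 0}\oplus\fg^\perp_{\ge\ell+1})$.  For $g\in\tilde P_{\ell+2}$ the operator $\tAd_{g^{-1}}$ shifts degree by at least $\ell+2$, so a $\fg^\perp_k$--component with $k\le\ell$ could only arise from $\theta$ in degree $\le -2$, which vanishes on $\cE_f$ by horizontality \eqref{E:horiz2}.  Hence $\tilde\cE_f^\ell:=\cE_f^\ell\cdot\tilde P_{\ell+2}$ still satisfies $\theta_{\fg^\perp_{\le\ell}}=0$, and by construction $\theta_{\ge\ell+2}$ maps its vertical tangent space onto $E_{\ge\ell+2}$.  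Maximality then gives $\cE_f^\ell=\tilde\cE_f^\ell$.  Your algebraic projections in the final paragraph would go through verbatim if you replaced $\fg^\perp_{\ge\ell+1}$ by $\fg^\perp_{\ge\ell+2}$ throughout; the point is that only surjectivity onto $E_{\ge\ell+2}$ is claimed, and the troublesome $\fg^\perp_{\ell+1}$--piece is neither needed nor available.
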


\begin{proof}
Recollect that $\cE_Q \to \check D_\Omega$ is a principal $\tilde P$--bundle.  Given $g \in \tilde P$, let 
\[
  R_g : \cE_Q \ \to \ \cE_Q
\]
denote the right action of $\tilde P$.  Set $\tilde P_{\ell+2} := \exp( E_{\ge \ell+2}) \subset \tilde P$.  Then 
\[
  \tilde\cE_f^\ell \ := \ \{ R_g \mathbf{e} \ | \ g \in \tilde P_{\ell+2} \,,\
   \mathbf{e} \in \cE_f^\ell \} \ \supset \ \cE_f^\ell
\]
is a bundle over $M$, and $\theta_{\ge\ell+2} : \tker\,\w \subset T_\mathbf{e} \tilde\cE_f^\ell \to E_{\ge\ell+2}$ onto by construction.  Additionally, $R_g^*\theta = \tAd_{g^{-1}}\theta$ implies that $\theta_{\fg^\perp_{\le\ell}}$ vanishes on $\tilde \cE_f^\ell$.
\end{proof}

\noindent Given $\cE^\ell_f$, which we assume to be maximal, we will show that $\cE^{\ell+1}_f \subset \cE^\ell_f$ exists.  This will complete the inductive argument establishing the existence of the bundle $\cE_f'$ in Claim \ref{cl:cE'}. 

\begin{claim} \label{cl:lambda}
There exists a holomorphic map $\lambda : \cE_f^\ell \to \tHom(\fg_- , \fg^\perp_{\ell + 1})
 = \fg^\perp \ot \fg_-^*$ so that 
\begin{equation} \label{E:lam1}
  \theta_{\fg^\perp_{\ell+1}} \ = \ \lambda(\w) \,.
\end{equation}
\end{claim}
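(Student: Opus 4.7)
The plan is to differentiate the inductive vanishing hypothesis $\left.\theta_{\fg^\perp_k}\right|_{\cE_f^\ell}=0$ (for $k\le\ell$) using the Maurer--Cartan equation \eqref{E:mce}, then reduce the resulting pointwise identity via representation theory of the $3$-graded Lie algebra $\fg_\bC$.

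First, I would compute $d\theta_{\fg^\perp_\ell}=-\half[\theta,\theta]_{\fg^\perp_\ell}$, expand $\theta=\theta_\fg+\theta_{\fg^\perp}$, and identify which brackets can land in $\fg^\perp_\ell$.  Using \eqref{E:br1}, \eqref{E:br3}, the concentration \eqref{E:gell} of $\fg_\bC$ in grades $\{-1,0,1\}$, and the fact that on $\cE_f^\ell$ the component $\theta_{\fg^\perp_j}$ can be nonzero only for $j\ge\ell+1$ (by induction plus horizontality, cf.~\eqref{E:gperpell}), the only surviving term is the pairing of $\theta_{\fg_{-1}}=\w$ with $\theta_{\fg^\perp_{\ell+1}}$ (pairings of two $\fg^\perp$ factors of total grade $\ell$ are excluded because each factor has grade $\ge \ell+1$, forcing $\ell\ge 2(\ell+1)$, impossible for $\ell\ge -1$).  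Counting both orderings, $0=d\theta_{\fg^\perp_\ell}$ reduces to the single identity $[\w,\theta_{\fg^\perp_{\ell+1}}]=0$ on $\cE_f^\ell$.

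Interpreting this $2$-form identity on tangent vectors $X,Y\in T_\mathbf{e}\cE_f^\ell$, taking $X\in\tker\,\w$ while $Y$ is arbitrary, and using that $\w$ remains surjective onto $\fg_-=\fg_{-1}$ on $\cE_f^\ell$ (the nondegeneracy inherited from \S\ref{S:rel}, since the reduction to $\cE_f^\ell$ is along fibers and does not affect the coframing of $TM$), I obtain
\[
  [\fg_{-1},\,\theta_{\fg^\perp_{\ell+1}}(X)] \ = \ 0
  \quad \hbox{for all } X\in\tker\,\w\,.
\]
The crux is then to show that any $\zeta\in\fg^\perp_{\ell+1}$ commuting with $\fg_{-1}$ vanishes.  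For $\ell\ge 0$, $\zeta$ sends the line $U^{n,0}$ into $U^{n+\ell+1,-\ell-1}$, which is $0$ by effectivity, so $\zeta|_{U^{n,0}}=0$.  For $\ell=-1$, $\zeta\in\fg^\perp_0\subset E_0$ acts on the line $U^{n,0}$ (see \eqref{E:h=1}) by some scalar $c$, and $\fg_{-1}$-equivariance combined with \eqref{E:ontoU} forces $\zeta=c\cdot\tId_{U_\bC}$; the polarization condition $\zeta\in\tEnd(U_\bC,Q)$ then forces $c=0$.  In either case, once $\zeta$ vanishes on $U^{n,0}$, the generation property \eqref{E:ontoU} of $U_\bC$ from $U^{n,0}$ by iterated application of $\fg_{-1}$, combined with $\fg_{-1}$-equivariance, propagates $\zeta=0$ to all of $U_\bC$.

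Thus $\theta_{\fg^\perp_{\ell+1}}$ annihilates $\tker\,\w$, so it factors through $\w$ as a well-defined holomorphic map $\lambda:\cE_f^\ell\to\tHom(\fg_-,\fg^\perp_{\ell+1})$ satisfying $\theta_{\fg^\perp_{\ell+1}}=\lambda(\w)$; holomorphicity of $\lambda$ is inherited from that of $\theta$.  The main obstacle I anticipate is the representation-theoretic step: one must verify the faithfulness of the $\fg_{-1}$-action on $\fg^\perp_{\ell+1}$, and the base case $\ell=-1$ is the subtlest, since it crucially invokes both the CY condition $h^{n,0}=1$ and the nondegeneracy of the polarization $Q$, whereas for $\ell\ge 0$ effectivity of the Hodge structure alone suffices.
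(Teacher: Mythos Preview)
Your proof is correct and follows the same overall arc as the paper: differentiate $\theta_{\fg^\perp_\ell}=0$ via the Maurer--Cartan equation, identify the sole surviving bracket $[\w,\theta_{\fg^\perp_{\ell+1}}]=0$, and reduce to the injectivity of $\zeta\mapsto[\,\cdot\,,\zeta]$ on $\fg^\perp_{\ell+1}$, so that $\theta_{\fg^\perp_{\ell+1}}$ factors through $\w$.  Where you genuinely diverge from the paper is in proving that injectivity (equivalently, that $\Gamma_{\ell+1}:=\{\zeta\in\fg^\perp_{\ell+1}:[\fg_{-1},\zeta]=0\}$ vanishes).  The paper argues abstractly: it assembles $\Gamma=\bigoplus_{m\ge\ell+1}\Gamma_m$ into a $\fg_\bC$--module, observes that the grading element $\mathtt{E}$ acts on it with only nonnegative eigenvalues, deduces $\Gamma=\Gamma_0$ with $[\fg_\bC,\Gamma]=0$, and then invokes Dynkin's theorem that $\fg_\bC\subset\tEnd(U_\bC,Q)$ is a maximal proper subalgebra to force $\Gamma=0$.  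You instead exploit the concrete Hodge structure on $U_\bC$: effectivity kills $\zeta|_{U^{n,0}}$ when $\ell\ge0$; the Calabi--Yau condition $h^{n,0}=1$ together with nondegeneracy of $Q$ handles $\ell=-1$; and the generation property \eqref{E:ontoU} propagates vanishing from $U^{n,0}$ to all of $U_\bC$.  Your route is more elementary and self-contained---it avoids Dynkin's classification entirely and makes transparent exactly which features of Gross's VHS (effectivity, CY, polarization, cyclicity under $\fg_{-1}$) are doing the work.  The paper's route, by contrast, is insensitive to the particular representation $U$ and would apply verbatim to any faithful irreducible $G$--module for which $\fg_\bC\hookrightarrow\tEnd(U_\bC,Q)$ is maximal, which is the natural level of generality for the moving-frames machinery being set up.
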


\begin{proof}
Since $\theta_{\fg^\perp_\ell}$ vanishes on $\cE^\ell_f$, the exterior derivative $\td \theta_{\fg^\perp_\ell}$ must as well.  Making use of the Maurer-Cartan equation \eqref{E:mce} and the relations \eqref{E:br3} we compute 
\begin{equation} \label{E:dth}
  0 \ = \ \td \theta_{\fg^\perp_\ell} \ = \  
  - [ \theta_{\fg^\perp_{\ell+1}} \,,\, \w ]
\end{equation}
on $\cE_f^\ell$.  The claim will then follow from Cartan's Lemma \cite[Lemma A.1.9]{MR2003610} once we show that the natural map
\begin{equation}\label{E:lam2}
  \fg^\perp_{\ell+1} \ \to \ \fg^\perp_\ell \ot \fg_-^*
  \quad\hbox{is injective} \,.
\end{equation}
The map \eqref{E:lam2} fails to be injective if and only if 
\[
  \Gamma_{\ell+1} \ := \ 
  \{ \z \in \fg^\perp_{\ell+1} \ | \ [\xi,\z]=0 \ \forall \ \xi \in \fg_- \}
\]
is nontrivial.  The Jacobi identity implies that $\Gamma_{\ell+1}$ is a $\fg_0$--module.  Inductively define $\Gamma_m := \fg_+(\Gamma_{m-1}) \subset \fg^\perp_m$.  The Jacobi identity again implies that $\Gamma = \op_{m\ge\ell+1}\,\Gamma_m$ is a $\fg_\bC$--module.  

Let $\mathtt{E} \in \tEnd(U_\bC,Q)$ be the endomorphism acting on $E_m$ by the scalar $m$. (That is, \eqref{E:Eell} is the eigenspace decomposition for $\mathtt{E}$.)  Then $\mathtt{E} \subset \fg_\bC$ lies in the center of $\fg_0 = \fk_\bC$ \cite[Proposition 3.1.2]{MR2532439}.  As a nontrivial semisimple element of $\fg_\bC$, $\mathtt{E}$ will act on any nontrivial $\fg_\bC$--module by both positive and negative eigenvalues.  Since $\ell \ge -1$, we see that $\mathtt{E}$ acts on $\Gamma$ by only non-negative eigenvalues.  This forces $\Gamma = \Gamma_{\ell+1} = \Gamma_0$ and $[\fg_\bC,\Gamma] = 0$.  

A final application of the Jacobi identity implies that $\fg_\bC \op \Gamma$ is a subalgebra of $\tEnd(U_\bC,Q)$.  Since $\fg_\bC \subset \tEnd(U_\bC,Q)$ is a maximal proper subalgebra \cite[Theorem 1.5]{MR0049903_trans}, and $\fg_\bC \op \Gamma_0 \not= \tEnd(U_\bC,Q)$, it follows that $\Gamma = \Gamma_0 = 0$.
\end{proof}

\noindent So to complete our inductive argument establishing the existence of $\cE_f'$ it suffices to show that there exists a subbundle $\cE^{\ell+1}_f \subset \cE^\ell_f$ on which $\lambda$ vanishes.  

\begin{claim}\label{cl:ker}
The map $\lambda$ takes value in the \emph{kernel} of the Lie algebra cohomology \cite{MR0142696} differential 
\[
  \d^1 : \fg^\perp \ot \fg_-^* \ \to \ 
  \fg^\perp \ot \tw^2 \fg_-^*
\]
defined by
\[
  \d^1(\a)(\xi_1,\xi_2) \ := \ 
  [ \a(\xi_1) , \xi_2] \,-\, [\a(\xi_2) , \xi_1] \,,
\]
where $\a \in \fg^\perp \ot \fg_-^* = \tHom(\fg_-,\fg^\perp)$ and $\xi_i \in \fg_-$.
\end{claim}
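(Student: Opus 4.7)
The plan is to derive $\d^1\lambda=0$ directly from the integrability identity \eqref{E:dth} already established in the proof of Claim \ref{cl:lambda}, namely $[\theta_{\fg^\perp_{\ell+1}},\w]=0$ on $\cE_f^\ell$. That identity was obtained by differentiating $\theta_{\fg^\perp_\ell}|_{\cE_f^\ell}=0$ via the Maurer--Cartan equation; it already encodes the relevant obstruction, and the claim is essentially a reformulation of it in the language of the Lie algebra cohomology differential $\d^1$.

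First I will substitute $\theta_{\fg^\perp_{\ell+1}} = \lambda(\w)$ from \eqref{E:lam1} into the above identity to obtain the $\fg^\perp_\ell$--valued $2$--form equation $[\lambda(\w),\w]=0$ on $\cE_f^\ell$. Then, at a point $\mathbf{e}\in\cE_f^\ell$, I will evaluate this $2$--form on a pair of tangent vectors $v_1,v_2\in T_\mathbf{e}\cE_f^\ell$; using the convention for the Lie bracket of Lie-algebra--valued $1$--forms recorded in the footnote to \eqref{E:mce}, the evaluation reads
\[
  [\lambda(\mathbf{e})(\w(v_1)),\w(v_2)] \,-\, [\lambda(\mathbf{e})(\w(v_2)),\w(v_1)] \ = \ 0\,,
\]
which by the defining formula for $\d^1$ is precisely $(\d^1\lambda(\mathbf{e}))(\w(v_1),\w(v_2))=0$.

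To pass from vanishing on the image of $\w$ to vanishing on all of $\fg_-\times\fg_-$, I will invoke nondegeneracy of $\w$, i.e.~surjectivity of $\w_\mathbf{e}: T_\mathbf{e}\cE_f^\ell \to \fg_-$. Here I expect the only genuine subtlety to lie: the nondegeneracy of $\w|_{\cE_f}$ was established in \S\ref{S:rel} as a consequence of the Calabi--Yau hypothesis on $f$, and one must verify it persists through the inductive reductions forming $\cE_f^\ell$. This is morally clear because the imposed conditions $\theta_{\fg^\perp_k}=0$ for $k\le\ell$ live in $\fg^\perp$, which is transverse to the $\fg_-$--valued form $\w$; making it precise means unpacking the construction of the maximal $\cE_f^\ell$ in Claim \ref{cl:max} to confirm that the horizontal directions realizing $\w$ onto $\fg_-$ survive the reductions. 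Once surjectivity is in hand, the displayed equation holds for every $(\xi_1,\xi_2)\in\fg_-\times\fg_-$, whence $\lambda(\mathbf{e})\in\ker\d^1$ for every $\mathbf{e}\in\cE_f^\ell$.
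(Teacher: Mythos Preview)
Your proposal is correct and follows essentially the same route as the paper, which simply writes: ``Substituting \eqref{E:lam1} into \eqref{E:dth} yields $[\lambda(\w),\w]=0$. The claim follows.'' You have faithfully unpacked this, including the nondegeneracy point the paper leaves implicit; your resolution is right, since $\cE_f^\ell\to M$ remains a submersion and vertical vectors lie in $\ker\w$, so surjectivity of $\w$ on $\cE_f$ passes to $\cE_f^\ell$.
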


\begin{proof}
Substituting \eqref{E:lam1} into \eqref{E:dth} yields $[\lambda(\w) , \w] = 0$.  The claim follows.   
\end{proof}

\begin{claim} \label{cl:im}
Suppose $\lambda$ takes value in the \emph{image} of the Lie algebra cohomology differential
\[
  \d^0 : \fg^\perp \ \to \ \fg^\perp \ot \fg_-^*
\]
defined by 
\[
  \d^0(\z)(\xi) \ := \ [\xi,\z]
\]
with $\z \in \fg^\perp$ and $\xi\in\fg_-$.  Then there exists a subbundle $\cE_f^{\ell+1} \subset \cE_f^\ell$ on which $\lambda$ vanishes.
\end{claim}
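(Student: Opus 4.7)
The plan is to realize $\cE_f^{\ell+1}$ as the reduction of $\cE_f^\ell$ obtained by gauging $\lambda$ to zero via the right action of a suitable subgroup of $\tilde P_{\ell+2}$. Choose a complement $\fn \subset \fg^\perp_{\ell+2}$ to $\ker(\d^0) \cap \fg^\perp_{\ell+2}$, so that $\d^0|_{\fn}$ is a linear isomorphism onto $\d^0(\fg^\perp_{\ell+2})$.

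First I would compute the transformation of $\theta_{\fg^\perp_{\ell+1}}$ under $R_{\exp(\nu)}$ for $\nu \in \fg^\perp_{\ell+2}$. Using $R_{\exp(\nu)}^*\theta = \exp(-\tad_\nu)\theta$ together with the grading \eqref{E:br3}, the $\fg^\perp_{\ell+1}$-component of $\tad_\nu^k\theta$ receives contributions only from $\theta_m$ with $m = \ell+1 - k(\ell+2)$. By the inductive hypothesis $\theta_{\fg^\perp_m}|_{\cE_f^\ell} = 0$ for all $m \le \ell$ together with horizontality \eqref{E:horiz2} (which forces $\theta_m = 0$ for $m \le -2$), the only surviving contributions are $k = 0$ (yielding $\lambda(\w)$) and $k = 1$ (yielding $-[\nu,\w]$, using $\theta_{-1}|_{\cE_f^\ell} = \w$). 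Unwinding $\d^0(\nu)(\xi) = [\xi,\nu]$ gives
\[
  R_{\exp(\nu)}^*\theta_{\fg^\perp_{\ell+1}} \ = \ \lambda(\w) \,+\, \d^0(\nu)(\w)\,.
\]

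Next I would use the hypothesis. Since $\lambda$ takes values in $\d^0(\fg^\perp)$, and since degree considerations force the relevant summand to be $\d^0(\fg^\perp_{\ell+2})$ (as $\fg_- = \fg_{-1}$ sends $\fg^\perp_{\ell+2}$ into $\fg^\perp_{\ell+1}$), there is a unique holomorphic function $\mu : \cE_f^\ell \to \fn$ with $\d^0(\mu) = -\lambda$. Define
\[
  \cE_f^{\ell+1} \ := \ \{\,\mathbf{e} \in \cE_f^\ell \ | \ \mu(\mathbf{e}) = 0\,\}\,.
\]
The displayed identity, evaluated at $\nu = \mu(\mathbf{e})$, shows that $\mu$ transforms under the right $\exp(\fn)$-action by translation (up to sign). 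Hence each $\exp(\fn)$-orbit in $\cE_f^\ell$ meets $\{\mu = 0\}$ transversely in exactly one point, so $\cE_f^{\ell+1} \to M$ is a smooth subbundle; by construction $\theta_{\fg^\perp_{\ell+1}}|_{\cE_f^{\ell+1}} = 0$, and the vanishing of $\theta_{\fg^\perp_k}|_{\cE_f^{\ell+1}}$ for $k \le \ell$ is automatic since $\cE_f^{\ell+1} \subset \cE_f^\ell$.

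The main obstacle is ensuring that the bracket identity of the first step is \emph{exact}, not merely first-order in $\nu$. This rests squarely on the combination of horizontality and the inductive normalizations already in place, which together annihilate $\tad_\nu^k \theta$ for all $k \ge 2$ (the shifted grade $m = \ell+1-k(\ell+2) \le -\ell-3 \le -2$ lands in a piece where $\theta$ vanishes). Once that identity is in hand, the construction reduces to linear algebra: the isomorphism $\d^0|_\fn$ produces $\mu$, and translation-covariance of $\mu$ under $\exp(\fn)$ produces the subbundle $\cE_f^{\ell+1}$, completing the induction step.
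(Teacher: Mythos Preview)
Your argument is correct and follows the same strategy as the paper: use the right action of $\exp(\fg^\perp_{\ell+2}) \subset \tilde P_{\ell+2}$ on the (maximal) bundle $\cE_f^\ell$ to normalize $\lambda$ to zero, the obstruction to doing so being precisely that $\lambda$ lie in $\mathrm{im}\,\delta^0$.

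The only difference is how the transformation law for $\lambda$ is obtained. The paper differentiates the defining relation $\theta_{\fg^\perp_{\ell+1}} = \lambda(\omega)$ via the Maurer--Cartan equation, then contracts with the vertical vector field $Z$ determined by $\theta(Z)=\zeta$ (Claim~\ref{cl:max}) to obtain the infinitesimal variation $Z\lambda = \mathrm{ad}_\zeta$, and integrates. You instead compute the finite transformation directly from $R_{\exp(\nu)}^*\theta = \mathrm{Ad}_{\exp(-\nu)}\theta$, using horizontality and the inductive hypothesis to kill the higher $\mathrm{ad}_\nu^k$ terms. Your route is slightly more direct (it bypasses the intermediate identity \eqref{E:dlam}), though it implicitly uses the maximality of $\cE_f^\ell$ (Claim~\ref{cl:max}) to guarantee that $R_{\exp(\nu)}$ preserves $\cE_f^\ell$, which you should make explicit. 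Both arrive at $\lambda(R_{\exp(\nu)}\mathbf{e}) = \lambda(\mathbf{e}) + \delta^0(\nu)$, from which the reduction follows.
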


\begin{proof} 
Differentiating \eqref{E:lam1} yields
\begin{equation}\label{E:dlam}
  0 \ = \ 
  \half \sum_{a+b=\ell+1} [ \theta_a , \theta_b ]_{\fg^\perp} \ + \ 
  \td \lambda \wedge \w \ - \ \lambda( [\theta_{\fg_0} , \w] ) \,.
\end{equation}
Claim \ref{cl:max} implies that $\theta(Z) = \z$ determines a unique, holomorphic vector field $Z$ on $\cE_f^\ell$.  (At the point $\mathbf{e} \in \cE_f^\ell$, the vector field is given by $Z_\mathbf{e} = \left.\frac{\td}{\td t} R_{\exp(t\z)} \mathbf{e} \right|_{t=0}$.) Taking the interior product of $Z$ with \eqref{E:dlam} yields
\begin{equation}\label{E:lam2}
  0 \ = \ (Z\lambda)(\w) \ + \ [\z,\w] \,.
\end{equation}
That is, $Z\lambda = \td \lambda(Z) = \tad_\z$.  Given $\mathbf{e} \in \cE_{f,x}^\ell$, set $\lambda_t := \lambda_{\mathbf{e}(t)}$ with $\mathbf{e}(t) := R_{\exp(t\z)} \mathbf{e}$.  Then \eqref{E:lam2} implies we may solve $\lambda_t = 0$ for $t$ if and only if $\lambda_\mathbf{e}$ takes value in the image of $\d^0$.
\end{proof}

It follows from Claims \ref{cl:ker} and \ref{cl:im} that the bundle $\cE_f^{\ell+1}$ exists if the cohomology group
\[
  H^1(\fg_-,\fg^\perp) \ := \ \frac{\tker\,\d^1}{\tim\,\d^0}
\]
is trivial.  In general $H^1(\fg_-,\fg^\perp) \not=0$.  Happily it happens that we don't need all of $H^1(\fg_-,\fg^\perp)$ to vanish, just the positively graded component.  To be precise, the gradings \eqref{E:grd} induce a graded decomposition 
\[
  \fg^\perp \ot \fg_-^* \ = \ 
  \bigoplus_\ell \fg^\perp_\ell \ot \fg_-^* \,.
\]
Since $\fg_- = \fg_{-1}$, the dual $\fg_-^*$ has graded degree $1$.  Consequently, $\fg^\perp_\ell \ot \fg_-^*$ has graded degree $\ell+1$.  The Lie algebra cohomology differentials $\d^1$ and $\d^0$ preserve this bigrading, and so induce a graded decomposition of the cohomology
\[
  H^1(\fg_-,\fg^\perp) \ = \ \bigoplus_\ell H^1_\ell
\]
where the component of graded degree $\ell+1$ is 
\[
  H^1_{\ell+1} \ := \ 
  \frac{\tker\,\{\d^1 : \fg^\perp_\ell \ot \fg_-^* \ \to \ 
  			\fg^\perp_{\ell-1} \ot \tw^2 \fg_-^*\}}
  {\tim\,\{\d^0 : \fg^\perp_{\ell+1} \ \to \ \fg^\perp_\ell \to \fg_-^* \}} \,.
\]
Since $\lambda$ takes value in $\fg^\perp_{\ell+1} \ot \fg_-^*$, and the latter is of pure graded degree $\ell+2 \ge 1$.  Consequently, 
\begin{equation}\label{E:cE'}
\begin{array}{c}
\hbox{\emph{there exists a subbundle $\cE_f'$ of $\cE_f$ on which}}\\
\hbox{\emph{$\theta_{\fg^\perp}$ vanishes if $H^1_m = 0$ for all $m \ge 1$.}}
\end{array}
\end{equation}
To complete the proof of Theorem \ref{T:main2} we make the following observations: First, as in the proof of Theorem \ref{T:main1} we may reduce to the case that $\check \Omega$ is irreducible.  Also as in that proof, the case that $\check\Omega$ is either a projective space (necessarily $\bP^1$) or a quadric hypersurface is trivial.

In the remaining cases $H^1_m = 0$ for all $m \ge 1$; this is a consequence of Kostant's theorem \cite{MR0142696} on Lie algebra cohomology; see \cite[Proposition 7]{MR2030098} or \cite[\S7.3]{MR3004278}.  The theorem now follows from Claim \ref{cl:cE'} and \eqref{E:cE'}.

\def\cprime{$'$} \def\Dbar{\leavevmode\lower.6ex\hbox to 0pt{\hskip-.23ex
  \accent"16\hss}D}


\begin{thebibliography}{10}

\bibitem{MR2532439}
Andreas {\v{C}}ap and Jan Slov{\'a}k.
\newblock {\em Parabolic geometries. {I}}, volume 154 of {\em Mathematical
  Surveys and Monographs}.
\newblock American Mathematical Society, Providence, RI, 2009.
\newblock Background and general theory.

\bibitem{MR0049903}
E.~B. Dynkin.
\newblock Maximal subgroups of the classical groups.
\newblock {\em Trudy Moskov. Mat. Ob\v s\v c.}, 1:39--166, 1952.

\bibitem{MR0049903_trans}
E.~B. Dynkin.
\newblock The maximal subgroups of the classical groups.
\newblock {\em Amer. Math. Soc. Trans.}, 6:245--378, 1957.
\newblock Translation of \cite{MR0049903}.

\bibitem{MR3004580}
Ralf Gerkmann, Mao Sheng, Duco van Straten, and Kang Zuo.
\newblock On the monodromy of the moduli space of {C}alabi-{Y}au threefolds
  coming from eight planes in {$\Bbb{P}^3$}.
\newblock {\em Math. Ann.}, 355(1):187--214, 2013.

\bibitem{MR2457736}
Mark Green, Phillip Griffiths, and Matt Kerr.
\newblock N\'eron models and boundary components for degenerations of {H}odge
  structure of mirror quintic type.
\newblock In {\em Curves and abelian varieties}, volume 465 of {\em Contemp.
  Math.}, pages 71--145. Amer. Math. Soc., Providence, RI, 2008.

\bibitem{MR2918237}
Mark Green, Phillip Griffiths, and Matt Kerr.
\newblock {\em Mumford-{T}ate groups and domains: their geometry and
  arithmetic}, volume 183 of {\em Annals of Mathematics Studies}.
\newblock Princeton University Press, Princeton, NJ, 2012.

\bibitem{MR0229641}
Phillip~A. Griffiths.
\newblock Periods of integrals on algebraic manifolds. {I}. {C}onstruction and
  properties of the modular varieties.
\newblock {\em Amer. J. Math.}, 90:568--626, 1968.

\bibitem{MR0233825}
Phillip~A. Griffiths.
\newblock Periods of integrals on algebraic manifolds. {II}. {L}ocal study of
  the period mapping.
\newblock {\em Amer. J. Math.}, 90:805--865, 1968.

\bibitem{MR1258484}
Benedict~H. Gross.
\newblock A remark on tube domains.
\newblock {\em Math. Res. Lett.}, 1(1):1--9, 1994.

\bibitem{MR2030098}
Jun-Muk Hwang and Keizo Yamaguchi.
\newblock Characterization of {H}ermitian symmetric spaces by fundamental
  forms.
\newblock {\em Duke Math. J.}, 120(3):621--634, 2003.

\bibitem{MR2003610}
Thomas~A. Ivey and J.~M. Landsberg.
\newblock {\em Cartan for beginners: differential geometry via moving frames
  and exterior differential systems}, volume~61 of {\em Graduate Studies in
  Mathematics}.
\newblock American Mathematical Society, Providence, RI, 2003.

\bibitem{MR0142696}
Bertram Kostant.
\newblock Lie algebra cohomology and the generalized {B}orel-{W}eil theorem.
\newblock {\em Ann. of Math. (2)}, 74:329--387, 1961.

\bibitem{MR3004278}
Joseph~M. Landsberg and Colleen Robles.
\newblock Fubini-{G}riffiths-{H}arris rigidity and {L}ie algebra cohomology.
\newblock {\em Asian J. Math.}, 16(4):561--586, 2012.

\bibitem{MR1136204}
Keiji Matsumoto, Takeshi Sasaki, and Masaaki Yoshida.
\newblock The monodromy of the period map of a {$4$}-parameter family of {$K3$}
  surfaces and the hypergeometric function of type {$(3,6)$}.
\newblock {\em Internat. J. Math.}, 3(1):164, 1992.

\bibitem{MR1476250}
Takeshi Sasaki, Keizo Yamaguchi, and Masaaki Yoshida.
\newblock On the rigidity of differential systems modelled on {H}ermitian
  symmetric spaces and disproofs of a conjecture concerning modular
  interpretations of configuration spaces.
\newblock In {\em C{R}-geometry and overdetermined systems ({O}saka, 1994)},
  volume~25 of {\em Adv. Stud. Pure Math.}, pages 318--354. Math. Soc. Japan,
  Tokyo, 1997.

\bibitem{MR3303246}
Mao Sheng, Jinxing Xu, and Kang Zuo.
\newblock The monodromy groups of {D}olgachev's {CY} moduli spaces are
  {Z}ariski dense.
\newblock {\em Adv. Math.}, 272:699--742, 2015.

\bibitem{MR2657440}
Mao Sheng and Kang Zuo.
\newblock Polarized variation of {H}odge structures of {C}alabi-{Y}au type and
  characteristic subvarieties over bounded symmetric domains.
\newblock {\em Math. Ann.}, 348(1):211--236, 2010.

\end{thebibliography}
\end{document}